\numberwithin{equation}{section}
\numberwithin{figure}{section}
\newtheorem{thm}{Theorem}[section]
\newtheorem{lem}[thm]{Lemma}
\newtheorem{prop}[thm]{Proposition}
\theoremstyle{remark}
\newtheorem{rem}[thm]{Remark}
\theoremstyle{definition}
\numberwithin{equation}{section}
\def\P{{\mathbb{P}}}
\def\S{{\mathcal{S}}}
\def\R{{\mathbb{R}}}
\newcommand{\dod}{\overset{\rm{d}}{\longrightarrow}}
\newcommand{\PP}{\mathbb{P}}
\newcommand{\E}{\mathbb{E}}
\newcommand{\ES}{\mathbb{E}_{\mathcal S}}
\renewcommand{\H}{\mathcal{H}}
\renewcommand{\P}{\mathbb{P}}
\newcommand{\NN}{\mathbb{N}}
\newcommand{\N}{\mathbb{N}}
\newcommand{\eps}{\varepsilon}
\newcommand{\s}{\sigma}
\newcommand{\8}{\infty}
\newcommand{\wt}{\widetilde}
\newcommand{\Ind}{\mathbf{1}_}
\newcommand{\un}{{\bf 1}_{U_n}}
\newcommand{\unn}{{\bf 1}_{U_{n+1}}}
\begin{document}

\title{\bfseries Limit theorems for  supercritical branching process in~random environment }
	\author{Dariusz Buraczewski, Ewa Damek}

\maketitle
\begin{abstract}

We consider  the branching process in random environment $\{Z_n\}_{n\geq 0}$, which is a~population growth process
where individuals reproduce independently of each other with the reproduction law randomly picked at each generation.
We focus on the supercritical case, when the process survives with a positive probability and grows exponentially fast on the
nonextinction set. Our main is goal is establish Fourier techniques for this model, which allow to obtain a number of precise estimates
related to limit theorems. As a consequence we provide new results concerning central limit theorem, Edgeworth expansions and  renewal theorem for $\log Z_n$.

 \noindent
\\
{\bf Keywords}: branching process, random environment,  central limit theorem, Berry Esseen bound, Edgeworth expansions, renewal theorem, Fourier transform, characteristic function.\\
{\bf MSC 2010}:  60F05,    60J80, 60K05;
\end{abstract}

\section{Introduction}\label{sec:intro}

A branching process in random environment (BPRE) is a population growth process
where individuals reproduce independently of each other with the 	reproduction law randomly picked at each generation.
The process was introduced by Smith and Wilkinson~\cite{smith1969branching} as a natural generalization of the classical Galton-Watson process.
To define BPRE formally, let $Q$ be a random measure on the set of non-negative integers $\NN_0 = \{0,1,\ldots\}$, that is a measurable function taking values in  the set
$\mathcal{M} = \mathcal{M}(\NN_0)$ of all probability measures on $\NN_0$ equipped with the total variation distance. Then a sequence of independent identically distributed (iid) copies of $Q$, say
$\mathcal{Q}=\{ Q_n \}_{n \geq 0}$  is called a random environment. The sequence $Z=\{Z_n\}_{n \geq 0}$  is
a~branching process in random environment $\mathcal{Q}$ if  $Z_0 =1$,
\begin{equation}\label{eq:s9}
	Z_{n+1} {=} \sum_{k=1}^{Z_n} \xi_{k,n},
\end{equation}
and given $\mathcal{Q}$, for every $n$, random variables $\{\xi_{k,n}\}_{ k \geq 1}$ are iid and independent of $Z_n$ with common distribution $Q_n$.
 For a more detailed discussion regarding BPRE itself, we recommend  the classical book of Athreya and Ney~\cite{athreya2012branching} or the recent monograph of Kersting and
Vatutin~\cite{kersting:vatutin:book}.\medskip

Asymptotic behavior of the process  $Z$ is usually determined by the environment.
Let
$A_k = \sum_{j=0}^\8 j Q_k(j)$
be the mean of the reproduction law and let $\Pi_n$ denote  the quenched expectation of $Z_n$, i.e.~$\Pi_n =	\E[Z_n \: | \: \mathcal{Q}] = \prod_{k=0}^{n-1}A_k$.
The process $Z$ survives with positive probability only in the supercritical case  $0<\E \log A <\infty$\footnote{Except of the trivial case $\P[Z_1=1]=1$.}, i.e.~when  the associated random walk
$S_n = \log \Pi_n$
drifts a.s. to $+\infty$ (Proposition 2.1 in \cite{kersting:vatutin:book}).  Then   the population grows exponentially fast on the survival set ${\mathcal S}$ \cite{Tanny} and so, it is convenient  to consider the sequence $\{\log Z_n\}_{n\geq 0}$ rather than $\{Z_n\}_{n\geq 0}$. It turns out that the behavior of $\log Z_n$ and $S_n$ is comparable and both processes admit similar limit properties. In particular, for the sequence $\log Z_n$ classical limit theorems are valid:
\begin{itemize}
\item law of large numbers:  $\frac{\log Z_n}{n}\to \mu:=\E\log A$ on $\S$ in probability, see \cite{Tanny77};
\item central limit theorem: if $Z_1 \ge 1$ a.s. and $\sigma^2 := \E\log^2A - \mu^2<\infty$, then  \newline $\frac{\log Z_n - n\mu}{\sqrt n \sigma}  \dod N(0,1)$,  see \cite{huang:liu1},
\item precise large deviations: $\P[\log Z_n > \rho n] \sim {c e^{-n\Lambda^*(\rho)}}/{\sqrt n}$, for $\rho >\mu$ and some rate function
$\Lambda^*$ (of course, some further hypotheses are required), see \cite{buraczewski:dyszewski}.
\end{itemize}
A number of further results including Berry-Esseen estimates and Cram\'er's type large deviation expansion have been recently proved in \cite{Grama:Liu}.

The purpose of the present paper is to study common features of two processes $\{\log Z_n\}_{n\geq 0}$ and $\{S_n\}_{n\geq 0}$ in more details and to obtain the Edgeworth expansion and the renewal theorem for $\{ \log Z_n\} _{n\geq 0}$.
Moreover, $\P (Z_1=0)>0$ is allowed, which, in particular, gives the central limit theorem with weaker assumptions. Our approach is based on the classical Fourier analysis.
We prove
that on the level of characteristic functions both processes are comparable, i.e.~their Fourier transforms admit the same asymptotic expansions
near 0 (Proposition \ref{prop:phi}). This observation allows essentially to
make use of Fourier techniques for the sequence $\{\log Z_n\}_{n\geq 0}$ and to obtain results far more refined than before.

\section{Main Results}\label{sec:results}

\subsection{Expansion of $\E [Z_n^{is}]$}

Let $P_0$ be a law on  the set $\mathcal M$ of all probability measures on $\N_0$.  Then the probability measure  $P = P_0^{\otimes \N_0}$ on ${\mathcal M}^{\otimes \N_0}$ defines the environment ${\mathcal Q}$.  Given the environment ${\mathcal Q}$, let $(\Gamma, \mathcal{G}) = (\NN_0^{\NN_0}, \mathcal{B}or(\NN_0^{\NN_0}))$ be the canonical probability space under which the process $Z$ is defined and let $\PP_{\mathcal{Q}}$ be the corresponding probability measure.
 Then, the total probability space is $(\Gamma \times {\mathcal M}^{\otimes \N_0}, \P)$, for $\P = \P_{\mathcal Q}\rtimes P$
 defined by $\P(A\times B) = \int_B \P_{\mathcal Q}(A)P(d{\mathcal Q})$, for $A\in \Gamma$, $B\in {\mathcal M}^{\otimes \N_0}$.
We will occasionally write $\PP[ \cdot \: | \: \mathcal{Q}] = \PP_{\mathcal{Q}}[\cdot]$.

 Our standing assumption is
\begin{equation}\label{mainass}
0<  \mu=\E \log A <\infty
\end{equation}
 i.e. BPRE $Z$ is supercritical and therefore it survives with positive probability. Moreover we assume two other conditions
\begin{itemize}
\item[(H1)] there are  $q > 1$, $p\in (1,2]$ such that
$$\E\bigg[\big( 1+|\log A_0|^q\big)\bigg(\bigg(\frac{Z_1}{A_0}\bigg)^p + 1\bigg)\bigg] <\infty;$$
  \item[(H2)] 
  $\P_{\mathcal Q}[Z_1=0] < \gamma $ a.s. for some $\gamma < 1$.
  \end{itemize}
In contrast to previous papers on limit theorems
for the supercritical BPRE \cite{Grama:Liu,huang:liu1,huang:liu2}
we allow $\P[Z_1=0]>0$ and so with positive probability the subcritical environment occurs. Probability of extinction of the process $\{Z_n\}_{n\geq 0}$ is positive but strictly smaller than $1$ a.s. due to (H2), see \cite{athreya:karlin:1971}.
 A careful examination of large deviation results and proofs in \cite{BansBoninglower} and \cite{BansBoningsmall} allows us to go beyond the restriction $Z_1\ge 1$ a.s. with as weak assumptions as (H1) and (H2).

Observe that hypothesis (H2) entails that $A$ is bounded from below, that is
\begin{equation}\label{eq:a}
   A \ge 1-\gamma,\qquad P \mbox{ a.s.}
\end{equation}
and hence all negative moments of $A$ are finite, i.e. $\E A^{-\varepsilon}<\infty$ for any $\varepsilon > 0$. Moreover, if $\eps $ is sufficiently close to $0$,
$$\E A^{-\varepsilon}<1.$$
Indeed, in view of \eqref{mainass}, the derivative of the function $\beta \mapsto \E A^{\beta} $ at 0 is strictly positive.

Let $\lambda$ be the characteristic function of $\log A$, i.e.
$$
\lambda(s) = \E[e^{is\log A}] = \E[A^{is}].
$$
Define
$$
\phi_n(s) =  \frac{\E_\S[e^{is\log Z_n}]}{\E[e^{is\log \Pi_n}]}
= \frac{\ES[Z_n^{is}]}{\lambda(s)^n},
$$ where $\E_\S[\cdot] = \E[\cdot|\S]$ denotes the expected value conditioned on the survival set $\S$.

\medskip

The following Proposition  plays a crucial role in the proof of our limit theorems and it may be viewed as the main novelty of the paper:
\begin{prop}\label{prop:phi}
Assume that hypotheses (H1) and (H2) are satisfied and let $K=\lfloor q-1\rfloor$. Then there are  $\eta>0$,  a function $\phi\in C^{K}(I_{\eta})$ defined on $I_{\eta} = (-\eta,\eta)$ and constants $\rho<1$, $C<\infty$ such that
\begin{equation}\label{eq:o3}
  \big| \phi_n^{(j)}(s)  - \phi^{(j)}(s)  \big| \le C\rho^n,\qquad s\in I_{\eta},\ j=0,\ldots,K.
\end{equation}
\end{prop}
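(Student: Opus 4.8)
The plan is to derive a recursion for $\ES[Z_n^{is}]$ in terms of $\ES[Z_{n-1}^{is}]$ by conditioning on the first generation, and then to interpret the resulting relation as an iteration of a contraction on a suitable Banach space of $C^K$ functions on $I_\eta$. Concretely, using the branching property at the first step, $Z_n$ conditioned on $Q_0$ and on $Z_1 = k$ is a sum of $k$ independent copies of a BPRE started one generation later in the shifted environment; this gives an identity of the form $\ES[Z_n^{is}] = \lambda(s)\,\ES[Z_{n-1}^{is}] + (\text{error})_n(s)$, where the error term collects the discrepancy coming from (i) the nonlinearity of $x\mapsto x^{is}$ applied to a sum versus the sum of the summands, and (ii) the conditioning on the survival set $\S$ (equivalently, the probability of extinction attached to the subpopulations). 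Dividing by $\lambda(s)^n$, this becomes $\phi_n(s) = \phi_{n-1}(s) + r_n(s)$ with $r_n$ a remainder, so that $\phi_n = \phi_1 + \sum_{k=2}^n r_k$; the Proposition then amounts to showing $\sum_k r_k$ converges in $C^K(I_\eta)$ with geometric tails, so $\phi := \lim \phi_n$ exists and $|\phi_n^{(j)} - \phi^{(j)}| \le C\rho^n$ for $j \le K$.

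The first substantive step is to choose $\eta$ small enough that $|\lambda(s)|$ is bounded below on $I_\eta$ (possible since $\lambda(0)=1$ and $\lambda$ is continuous), so that division by $\lambda(s)^n$ does not blow up the estimates, and then to control the $C^K$-norm of the remainder $r_n$. Here hypothesis (H1) enters decisively: the exponent $q>1$ controls $\log A_0$-moments, which is exactly what is needed to differentiate $s \mapsto \ES[Z_n^{is}]$ up to $K = \lfloor q-1 \rfloor$ times and to bound those derivatives (each derivative brings down a factor $\log Z_n$, comparable to $\log \Pi_n$ plus a fluctuation, and the $q$-th moment of $|\log A_0|$ times the $p$-th moment of $Z_1/A_0$ is the natural quantity bounding the single-step contribution). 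The factor $(Z_1/A_0)^p$ with $p \in (1,2]$ is what quantifies the ``sum vs.\ sum of $is$-powers'' error: for $x_1,\dots,x_k \ge 0$ one has an estimate like $\big|(\sum x_j)^{is} - \big(\sum x_j\big)^{is}\big|$ — more precisely the error in replacing $(\sum \xi_{j})^{is}$ by its conditional mean — controlled by $|s|$ times a $p$-th-moment expression, which after normalising by $\Pi_n^{is}$ and using $\E A^{-\eps} < 1$ for small $\eps$ (guaranteed by \eqref{mainass} and \eqref{eq:a}) yields a geometric factor $\rho^n$. Hypothesis (H2), via $A \ge 1-\gamma$ and boundedness of the extinction probability away from $1$, is used to handle the conditioning on $\S$: the Radon–Nikodym-type correction between $\E[\,\cdot\,\mathbf{1}_\S]$ and $\E_\S[\cdot]$, and the probability that a size-$k$ first generation leads to extinction, both contribute terms that are uniformly summable thanks to $\P_{\mathcal Q}[Z_1=0] < \gamma$.

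The main obstacle, I expect, is making the single-step error estimate uniform in $s \in I_\eta$ \emph{together with} its first $K$ derivatives, while simultaneously extracting a genuine geometric decay rate $\rho < 1$ rather than merely summability. The derivatives of $\phi_n$ mix derivatives of $\ES[Z_n^{is}]$ with negative powers and derivatives of $\lambda(s)^{-n}$ (the latter producing polynomial-in-$n$ factors $\times (\log A)$-moments), so one must organise the Leibniz expansion carefully and absorb the polynomial factors into a slightly worse, but still $<1$, base $\rho$ — this is where $q > 1$ being strictly bigger than $K+1$ in general, or the slack in $\E A^{-\eps}<1$, is spent. A secondary technical point is justifying differentiation under the expectation up to order $K$, which again reduces to the moment bound in (H1); and one should double-check that the limit function $\phi$ inherits the $C^K$ regularity, which follows from uniform convergence of the derivative sequences. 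Once the uniform geometric bound on $\|r_n\|_{C^K(I_\eta)}$ is in hand, \eqref{eq:o3} is immediate by summing the tail $\sum_{k>n}\|r_k\|_{C^K}$.
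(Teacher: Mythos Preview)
Your overall architecture is right: the problem does reduce to proving that the telescoping differences $r_n = \phi_n - \phi_{n-1}$ decay geometrically in $C^K(I_\eta)$, and the Leibniz-expansion bookkeeping you sketch (polynomial-in-$n$ factors from differentiating $\lambda(s)^{-n}$, absorbed into a slightly larger base $\rho$) is exactly what is needed. The gap is in the mechanism you propose for bounding $r_n$.

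Conditioning on the \emph{first} generation does not produce a geometrically small error. Writing $Z_n = \sum_{j=1}^{Z_1} Z_{n-1}^{(j)}$ as a sum of $Z_1$ conditionally i.i.d.\ copies in the shifted environment, you would need to compare $\bigl(\sum_{j\le Z_1} Z_{n-1}^{(j)}\bigr)^{is}$ with $A_0^{is}(Z_{n-1}')^{is}$. But $Z_1$ is bounded in distribution --- it does not grow with $n$ --- so there is no law-of-large-numbers averaging at this step; the discrepancy between $\sum_{j\le Z_1} Z_{n-1}^{(j)}$ and $A_0 Z_{n-1}'$ is of the same order as the summands themselves, and nothing in that comparison decays in $n$. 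Your reference to $\E A^{-\varepsilon}<1$ is the right ingredient, but it does not enter through the first-step error in any way your outline makes visible.

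The paper works at the \emph{last} step instead. Set $\Delta_n = Z_{n+1}/(A_n Z_n)$ and use independence of $A_n$ from $(Z_n,\mathbf{1}_{U_n})$ to write
\[
\ES[Z_{n+1}^{is}] - \lambda(s)\,\ES[Z_n^{is}] \;=\; \tfrac{1}{\P[\S]}\,\E\bigl[(Z_{n+1}^{is} - (A_nZ_n)^{is})\,\mathbf{1}_{U_{n+1}}\bigr] \;+\; O\bigl(\P[U_n\setminus\S]\bigr),
\]
the last term being exponentially small. Now $\Delta_n = \frac{1}{Z_n}\sum_{i\le Z_n}\xi_{i,n}/A_n$ is an average of $Z_n$ conditionally i.i.d.\ mean-one terms, and $Z_n$ is exponentially large on $U_n$; the Marcinkiewicz--Zygmund inequality gives
\[
\E\bigl[\,|\Delta_n-1|^p \,\big|\, Z_n\,\bigr] \;\le\; C\,Z_n^{1-p}\,\E\bigl[|Z_1/A_0-1|^p\bigr].
\]
Hypothesis (H1) controls the second factor (with the logarithmic weights needed for the derivatives), and $\E[Z_n^{1-p}\mathbf{1}_{U_n}] \le C\rho^n$ follows from the lower large-deviation estimate $\P[1\le Z_n\le e^{\theta n}]\le Ce^{-\beta n}$ --- this is where $\E A^{-\varepsilon}<1$ and (H2) are actually spent. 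Combining, $\E\bigl[|\log(A_nZ_n)|^k|\log\Delta_n|\,\mathbf{1}_{U_{n+1}}\bigr]\le C\rho^n$ for the relevant $k$, and since $|Z_{n+1}^{is}-(A_nZ_n)^{is}|\le |s|\,|\log\Delta_n|$ this gives the geometric bound on $r_n$ and its derivatives. In short, the geometric decay comes from the size of $Z_n$ at the \emph{end} of the trajectory, not from anything happening at generation one; your proposal needs to be reorganised around $\Delta_n$ rather than around the branching at the first step.
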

Formula \eqref{eq:o3} indicates that the characteristic function of $\log Z_n$ is comparable with the characteristic function of $S_n$, the sum of i.i.d.
random variables $\{\log A_i\}_{i\geq 0}$. In consequence, both processes share the same limit behaviour. Similar techniques  have been widely used
to study limit theorems related to  complex random combinatorial objects (see e.g. \cite{FMN,KMS} for description of mod-$\phi$ convergence and related techniques) or general Markov chains (see e.g. \cite{herve} for the  spectral Nagaev-Guivarc'h method).
The proof of Proposition \ref{prop:phi},  contained in section 3, is inspired by the  methods introduced in \cite{BansBoninglower,damek:natert:kolesko,Grama:Liu}.

\subsection{Limit theorems}

 Now we state our limit theorems concerning the sequence $\{\log Z_n\}_{n \ge 0}$.  We start with the central limit theorem:

\begin{thm}[Central Limit Theorem]\label{thm:clt}
  Assume that $\E |\log A|^2 <\infty$. Assume moreover that conditions (H1) and (H2) are satisfied. Then
  $$
  \frac{\log Z_n - n\mu}{\sqrt n \sigma} \overset{{\rm d}}{\to} N(0,1)
  $$ conditionally on the survival set ${\mathcal S}$, where
  $\mu=\E \log A$, $\sigma^2 = {\rm Var}(\log A) = -\lambda''(0)+\lambda'(0)^2$.
\end{thm}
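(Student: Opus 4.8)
The plan is to derive the central limit theorem directly from the characteristic-function comparison in Proposition \ref{prop:phi}. Since $\E|\log A|^2<\infty$, we have $K=\lfloor q-1\rfloor\geq 1$ under (H1) with $q>1$; I would first note that (H1) forces $q>1$ but to get two derivatives one wants $q\geq 3$, so in general $K$ may only be $1$. Fortunately the CLT needs only control of $\phi$ and $\phi'$ near $0$, together with the values $\phi(0)=1$ and the asymptotics of $\lambda(s)^n$, so $K\geq 1$ suffices. The key identity is
$$
\ES\big[e^{is\log Z_n}\big] = \phi_n(s)\,\lambda(s)^n,
$$
and after the usual centering/scaling we must evaluate this at $s=t/(\sigma\sqrt n)$ and show it converges to $e^{-t^2/2}$.

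Concretely, I would fix $t\in\RR$ and, for $n$ large enough that $s_n:=t/(\sigma\sqrt n)\in I_\eta$, write
$$
\ES\Big[\exp\!\Big(is_n(\log Z_n - n\mu)\Big)\Big]
= e^{-is_n n\mu}\,\phi_n(s_n)\,\lambda(s_n)^n.
$$
For the random-walk factor, the assumption $\E|\log A|^2<\infty$ gives the standard second-order expansion $\log\lambda(s) = i\mu s - \tfrac12\sigma^2 s^2 + o(s^2)$ as $s\to 0$ (with $\mu=-i\lambda'(0)$, $\sigma^2=-\lambda''(0)+\lambda'(0)^2$), hence $e^{-is_n n\mu}\lambda(s_n)^n = \exp\big(n\log\lambda(s_n) - is_n n\mu\big) \to e^{-t^2/2}$ by the classical Lindeberg–Lévy computation. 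For the correction factor, Proposition \ref{prop:phi} with $j=0$ gives $|\phi_n(s)-\phi(s)|\leq C\rho^n$ uniformly on $I_\eta$, so $\phi_n(s_n) = \phi(s_n) + O(\rho^n)$; and since $\phi\in C^K(I_\eta)$ with $K\geq 1$ is continuous at $0$ with $\phi(0)=\lim_n\phi_n(0)=\lim_n \ES[1]/1 = 1$, we get $\phi(s_n)\to\phi(0)=1$. Multiplying the two limits yields
$$
\ES\Big[\exp\!\Big(\tfrac{it}{\sigma\sqrt n}(\log Z_n - n\mu)\Big)\Big]\longrightarrow e^{-t^2/2},
$$
and Lévy's continuity theorem (applied to the conditional law given $\S$, which is a genuine probability measure since $\P(\S)>0$ under \eqref{mainass}) finishes the proof.

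The only genuinely delicate point is making sure Proposition \ref{prop:phi} is actually applicable, i.e.\ that $\E|\log A|^2<\infty$ together with (H1)–(H2) guarantees the hypotheses of the Proposition with $K\geq 1$; this is where one must check that $q>1$ is enough and that nothing more than the already-assumed moment (H1) is needed for the single derivative we use. Everything else is the textbook Fourier argument for i.i.d.\ sums, transplanted to $\log Z_n$ via \eqref{eq:o3}. I do not expect the normalization $\phi(0)=1$ or the continuity of $\phi$ at $0$ to cause trouble, since both are immediate from the definition of $\phi_n$ and the uniform estimate \eqref{eq:o3}.
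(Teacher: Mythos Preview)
Your proposal is correct and follows essentially the same route as the paper: write $\ES[e^{is\log Z_n}]=\phi_n(s)\lambda(s)^n$, use the classical CLT expansion for $\lambda(s_n)^n e^{-is_n n\mu}$, and use Proposition~\ref{prop:phi} together with $\phi(0)=1$ to get $\phi_n(s_n)\to 1$. One small clarification: you only invoke \eqref{eq:o3} for $j=0$ and the continuity of $\phi$ at $0$, so $K\ge 0$ (i.e.\ merely $q>1$) already suffices---your worry about needing $K\ge 1$ is unnecessary.
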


Under hypothesis $Z_1 \ge 1$  the central limit theorem was proved in \cite{huang:liu1}. Moreover in the succeeding paper \cite{Grama:Liu} some further results, including Berry-Esseen estimates, were established.  However, the methods used there cannot be applied when $Z_1$ may vanish. In \cite{huang:liu1} and \cite{Grama:Liu} the arguments depend strongly on negative moments of the martingale limit $W=\lim _{n\to \8} Z_n/\Pi_n$. In our situation $W$ may be zero with positive probability, thereupon its negative moments cannot be defined.

Our method is more direct and allows to mimic proofs for i.i.d.~random variables based on Fourier techniques. To present the potential of  the approach we provide two other results concerning the asymptotic behavior of $\log Z_n$: the Edgeworth expansion and the renewal theorem. These more accurate results require controlling characteristic
functions not only near 0, as in Proposition \ref{prop:phi}, but also outside a small neighbourhood of the origin. Therefore we need to assume
below that $\log A$ is nonlattice (then $\lambda(s)\not= 1$ for all $s\in \R \setminus \{0\}$). Also, to prove the higher order Edgeworth expansion,
we assume below condition \eqref{eq:pp1} saying that $\lambda(s)$ cannot approach 1 arbitrary close for $s$ tending to infinity. Both these additional hypotheses are required in the
classical situation while studying sums of i.i.d. random variables.

The first result describes the asymptotic expansion in  the central limit theorem in terms of
the Edgeworth series. Note that, since the function $\lambda$ is continuous and $\lambda(0)=1$,  the logarithm of $\lambda$ is well defined  in some neighbourhood of $0$.  Therefore, we may write $\Lambda(s) = \log \lambda(s) = \log \E A^{is}$ for $s\in I_{\eta}$, for some small $\eta>0$.
\begin{thm}[Edgeworth expansion]\label{thm:edg}
Assume $\log A$ is nonlattice and  conditions (H1) with $q\ge 4$ and (H2) are satisfied. Let $r$ be any positive integer number such that
$r\in [3,q-1]$.
If $r\ge 4$ assume additionally that
\begin{equation}\label{eq:pp1}
\limsup_{|s|\to \infty} |\E A^{is}| <1.
\end{equation}
Then
\begin{equation}
\label{eq:ss1}
\P\bigg( \frac{\log Z_n - n\mu}{\sigma \sqrt n} \le x \Big| {\mathcal S} \bigg)
=  G_r(x) + o(n^{-r/2+1}),
\end{equation}
where
\begin{equation}\label{eq:pp2}
G_r(x)= \Psi(x) - \psi(x) \sum_{k=3}^r n^{-k/2+1}Q_k(x),
\end{equation}
$\psi(x) = \frac1{\sqrt{2\pi}} e^{-x^2/2}$ is the density of a standard Gaussian variable, $\Psi(x)=\int_{-\infty}^x \psi(y)dy$ is the corresponding cumulative distribution function, $Q_k$ is a polynomial of order $k-1$ depending on the first $k$ moments of $Z_1$ and $A_1$ (but  independent of $n$ and
$r$) and  $o(n^{-r/2+1})$ denotes a function of order smaller than $n^{-r/2+1}$ uniformly with respect to $x$.
 In particular, for $r=3$, we have
	\begin{equation}
\label{eq:ss1}
\P\bigg( \frac{\log Z_n - n\mu}{\sigma \sqrt n} \le x \Big| {\mathcal S} \bigg)
= \Psi(x) - \frac{\psi(x)}{\sqrt n} \bigg( \frac{i\Lambda'''(0)}{6\sigma^3}(1-x^2) + \frac{i\phi'(0)}{\sigma}    \bigg)+ o(n^{-1/2}).
\end{equation}
\end{thm}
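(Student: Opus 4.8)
The plan is to transfer the classical Edgeworth expansion machinery for sums of i.i.d.\ random variables to $\log Z_n$, using Proposition~\ref{prop:phi} to replace the characteristic function $\E_{\mathcal S}[Z_n^{is}]$ by $\lambda(s)^n\phi(s)$ up to an error $C\rho^n$ that is negligible against every polynomial rate. Concretely, the conditional distribution function on the left of \eqref{eq:ss1} can be recovered from the characteristic function $\phi_n(s)\lambda(s)^n = \E_{\mathcal S}[Z_n^{is}]$ via a smoothing/inversion argument (the Esseen inequality, or a direct contour/Fourier inversion against a smooth approximation of the indicator). So the first step is to write $\E_{\mathcal S}[e^{is(\log Z_n - n\mu)/(\sigma\sqrt n)}]$, substitute $u = s/(\sigma\sqrt n)$ as the scaled variable, and split the inversion integral into the region $|s|\le \delta\sqrt n$ (equivalently $|u|\le\delta$, near the origin) and the complementary region.

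In the central region I would use the expansion $\Lambda(s)=\log\lambda(s)$ and $\phi(s)$, both of which are $C^K$ with $K=\lfloor q-1\rfloor\ge 3$ (since $q\ge 4$), to Taylor-expand
\[
\phi_n(s/(\sigma\sqrt n))\,\lambda(s/(\sigma\sqrt n))^n\, e^{-is n\mu/(\sigma\sqrt n)}
= e^{-s^2/2}\Big(1 + \sum_{k=1}^{r-2} n^{-k/2} P_k(is) + \text{remainder}\Big),
\]
where the polynomials $P_k$ are assembled from $\Lambda^{(j)}(0)$ for $j\le k+2$ and from $\phi^{(j)}(0)$ for $j\le k$; note $\Lambda'(0)=i\mu$ cancels the centering, $\Lambda''(0)=-\sigma^2$ produces the Gaussian factor, and $\phi(0)=1$ because $\phi_n(0)=1$. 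Integrating this expansion against the inversion kernel and recognizing the Hermite-type integrals $\int e^{-s^2/2}(is)^m e^{-isx}\,ds$ as derivatives of $\psi$ reproduces exactly $G_r(x)$ in \eqref{eq:pp2}, with $Q_k$ the Cram\'er--Edgeworth polynomials modified by the extra $\phi$ contribution; for $r=3$ only $P_1(is) = \frac{\Lambda'''(0)}{6}(is)^3 + \phi'(0)(is)$ survives, and $\int e^{-s^2/2}(is)^3 e^{-isx} ds$ and $\int e^{-s^2/2}(is)e^{-isx} ds$ give the $(1-x^2)$ and the constant terms respectively after dividing by $\sigma^3$ and $\sigma$, which is precisely \eqref{eq:ss1}. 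The Taylor remainder in this region is controlled by $\E|\log A|^{q}<\infty$ (from (H1)) and the $C^K$ bound on $\phi$, giving $o(n^{-r/2+1})$ uniformly in $x$ after integration.

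In the non-central region $|s|>\delta\sqrt n$ the factor $\lambda(s/(\sigma\sqrt n))^n$ must be shown to contribute only $o(n^{-r/2+1})$. For moderate $|s|$ (say $\delta\sqrt n < |s| < T\sqrt n$ for fixed $T$) this follows from $|\lambda(u)|<1$ for $u\ne 0$ (the nonlattice assumption gives $\lambda(u)\ne 1$, hence strict inequality on compacts away from $0$), producing exponential decay $|\lambda(u)|^n\le e^{-cn}$; here one also needs a crude bound $|\phi_n(u)|\le \E_{\mathcal S}[Z_n^{iu}]/|\lambda(u)|^n$ — more honestly one works directly with $|\E_{\mathcal S}[Z_n^{iu}]|$ and bounds it, but the cleanest route is to keep the product $\phi_n(u)\lambda(u)^n=\E_{\mathcal S}[Z_n^{iu}]$ intact. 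For $|s|\ge T\sqrt n$, when $r\ge 4$, the hypothesis \eqref{eq:pp1} $\limsup_{|u|\to\infty}|\lambda(u)|<1$ guarantees uniform exponential smallness of $|\lambda(u)|^n$ there too; when $r=3$ no such control is needed because the smoothing kernel in the Esseen inequality already handles the tail at the required $o(n^{-1/2})$ level. The main obstacle, and the place where genuine work beyond the classical proof is required, is establishing a usable bound on $|\E_{\mathcal S}[Z_n^{is}]|$ (equivalently on $\phi_n(s)$) for $s$ \emph{away} from $0$ — Proposition~\ref{prop:phi} only controls $\phi_n$ on $I_\eta$, and outside it one cannot simply divide by $\lambda(s)^n$. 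I would address this by revisiting the recursion/contraction argument behind Proposition~\ref{prop:phi} to show that $|\E_{\mathcal S}[Z_n^{is}]|$ decays exponentially whenever $|\lambda(s)|$ is bounded away from $1$, which is exactly the i.i.d.\ ingredient that the branching structure is compatible with; once this is in hand, the rest is the standard Cram\'er--Edgeworth bookkeeping.
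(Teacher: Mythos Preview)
Your proposal is correct and follows essentially the same route as the paper: Berry--Esseen smoothing, Taylor expansion of $\phi_n$ and $\Lambda$ in the central region via Proposition~\ref{prop:phi}, and a separate bound on $|\E_{\mathcal S} Z_n^{is}|$ away from the origin. The ``main obstacle'' you isolate---controlling $|\E_{\mathcal S} Z_n^{is}|$ outside $I_\eta$---is handled in the paper by Lemma~\ref{lem:nonl}, whose proof is indeed the recursion/telescoping idea you anticipate (write $\E_{\mathcal S} Z_n^{is}$ as $\E_{\mathcal S}(Z_m\Pi_{m,n})^{is}$ plus a telescoping sum of increments bounded via $|\log\Delta_j|$, then use independence of $Z_m$ and $\Pi_{m,n}$ to extract $|\lambda(s)|^{n-m}$); the paper takes $m=\lfloor n/4\rfloor$ and splits the $t$-integral at $\sigma n^{1/6}$ rather than at a fixed $\delta\sqrt n$, but this is a cosmetic difference.
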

 \begin{rem}
$Q_k$ may be written explicitly, see \eqref{qpoly} below.
\end{rem}

Our next result is a version of the renewal theorem for $\log Z_n$:
\begin{thm}[Renewal theorem for $\log Z_n$] \label{thm:ren}
  Assume that $\log A$ is nonlattice and hypotheses (H1) and (H2) are satisfied. Then, for all reals $0\le B<C<\infty$
  $$
  \lim_{y\to\infty}\ES\big[  \# \{ n: \log Z_n \in y+[B,C]\}  \big]
=   \lim_{y\to\infty}\ES\big[  \# \{ n: e^By\le Z_n \le e^C y\}  \big] = \frac 1{\mu}(C-B).
  $$
\end{thm}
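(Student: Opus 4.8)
The plan is to run the Fourier-analytic proof of the renewal theorem, using Proposition~\ref{prop:phi} to reduce the statement for $\log Z_n$ to the classical key renewal (Blackwell) theorem for the i.i.d.\ random walk $S_n=\log\Pi_n=\sum_{k=0}^{n-1}\log A_k$, whose increment $\log A$ is nonlattice with mean $\mu\in(0,\infty)$. First, the two displayed limits coincide under the substitution $y\mapsto e^y$, so it suffices to treat $\log Z_n$. For $y\in\R$ put $\nu_y(\cdot)=\sum_{n\ge0}\P(\log Z_n\in y+\cdot\mid\S)$; then $\ES[\#\{n:\log Z_n\in y+[B,C]\}]=\nu_y([B,C])$ by monotone convergence, and $\nu_y$ is a locally finite measure because on $\S$ one has $\log Z_n\to\infty$ while, for fixed $t$, $\P(\log Z_n\le t\mid\S)$ decays geometrically in $n$ (a consequence of the large-deviation estimates underlying (H1), cf.~\cite{BansBoninglower,BansBoningsmall}). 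The (distributional) Fourier transform of $\nu_0$ is $\widehat\nu(s)=\sum_{n\ge0}\ES[Z_n^{is}]=\sum_{n\ge0}\phi_n(s)\lambda(s)^n$, which will be used only after pairing with a band-limited test function.

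The key point is the behaviour of $\widehat\nu$ near $0$. Since $\phi_n(0)=1$ for all $n$, also $\phi(0)=1$, and Proposition~\ref{prop:phi} gives, for $s\in I_\eta\setminus\{0\}$,
\[
\widehat\nu(s)-\sum_{n\ge0}\lambda(s)^n=\sum_{n\ge0}(\phi_n(s)-1)\lambda(s)^n=\sum_{n\ge0}\bigl(\phi_n(s)-\phi(s)\bigr)\lambda(s)^n+\frac{\phi(s)-1}{1-\lambda(s)}.
\]
The first series converges uniformly on $I_\eta$ (its $n$-th term is $\le C\rho^n$ in modulus), hence is bounded and continuous. For the second term, $1-\lambda(s)=-i\mu s+o(s)$ and $|1-\lambda(s)|\ge|\,\mathrm{Im}\,(1-\lambda(s))\,|=|\E\sin(s\log A)|\ge c|s|$ near $0$ (using only $\E|\log A|<\infty$); since $\phi(s)-1=\phi(s)-\phi(0)=O(|s|)$ when $\phi\in C^1$, i.e.\ $q\ge2$, the quotient $(\phi(s)-1)/(1-\lambda(s))$ is bounded near $0$ and extends continuously across $0$ with value $i\phi'(0)/\mu$. (For $1<q<2$, where Proposition~\ref{prop:phi} yields only $\phi\in C^0$, one uses instead a Hölder bound $|\phi(s)-1|=O(|s|^\delta)$ with $\delta>0$, available from the contraction argument proving that proposition, which still keeps this term locally integrable near $0$.) Away from $0$, non-latticeness gives $\lambda(s)\neq1$, so $\sum_n\lambda(s)^n=(1-\lambda(s))^{-1}$ is bounded and continuous on each annulus $\{\eta'\le|s|\le T\}$; the matching fact that $\sum_n\ES[Z_n^{is}]$ converges there to a bounded continuous function follows from the geometric-decay estimate $\sup_{\eta'\le|s|\le T}|\ES[Z_n^{is}]|\le C_T\rho_T^{\,n}$ with $\rho_T<1$ — the control of the characteristic function of $\log Z_n$ outside a neighbourhood of the origin advertised in the introduction, proved by a contraction scheme parallel to that of Proposition~\ref{prop:phi} but now exploiting non-latticeness. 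Combining both regimes, for every fixed $T$ the function $\psi(s):=\widehat\nu(s)-\sum_n\lambda(s)^n$ is the restriction to $[-T,T]$ of a bounded continuous function; in particular $\widehat\nu$ has the same leading singularity $(1-\lambda(s))^{-1}\sim(-i\mu s)^{-1}$ at the origin as the classical renewal measure of $S_n$.

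Finally one assembles the pieces by the standard smoothing argument. Fix $g$ in the Schwartz class with $\widehat g$ supported in $[-T,T]$. Since $g$ is Schwartz and $\nu_y$, $U:=\sum_n\P(S_n\in\cdot)$ have at most linear growth, the sums $\sum_n\ES[g(\log Z_n-y)]=\int g(x-y)\,\nu_0(dx)$ and $\sum_n\E[g(S_n-y)]=\int g(x-y)\,U(dx)$ converge absolutely; inverting the Fourier transform, using that the partial sums of $\psi$ are uniformly bounded on $[-T,T]$ (again via $|1-\lambda(s)|\ge c|s|$, applied to $\sum_{n\le N}\lambda(s)^n$), and letting $N\to\infty$ by dominated convergence, we obtain
\[
\sum_{n\ge0}\ES[g(\log Z_n-y)]-\sum_{n\ge0}\E[g(S_n-y)]=\frac{1}{2\pi}\int_{-T}^{T}\widehat g(s)\,e^{-isy}\,\psi(s)\,ds\longrightarrow 0\qquad(y\to\infty)
\]
by the Riemann–Lebesgue lemma applied to the $L^1$ function $s\mapsto\widehat g(s)\psi(s)\mathbf 1_{[-T,T]}(s)$. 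Since $\sum_n\E[g(S_n-y)]\to\frac1\mu\int g$ by the classical key renewal theorem for the nonlattice walk $S_n$, this gives $\sum_n\ES[g(\log Z_n-y)]\to\frac1\mu\int g$ for every such band-limited $g$; approximating an arbitrary continuous compactly supported function uniformly by band-limited ones and controlling the error against $\nu_y$ via the uniform renewal bound $\sup_{y,a}\nu_y([a,a+1])<\infty$ extends this to all continuous, hence to all directly Riemann integrable, $g$, and sandwiching $\mathbf 1_{[B,C]}$ between such $g$'s yields $\nu_y([B,C])\to\frac1\mu(C-B)$, which is the claim.

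The main obstacle is the step away from the origin: unlike Proposition~\ref{prop:phi}, which lives near $0$, the geometric decay of $|\ES[Z_n^{is}]|$ for $s$ bounded away from $0$ requires a fresh (though structurally parallel) contraction argument and genuinely uses non-latticeness. The secondary technical points are the uniform renewal bound $\sup_{y,a}\nu_y([a,a+1])<\infty$ — needed both to treat $\nu_y$ as a measure against which one integrates and to pass from band-limited test functions to indicators — and, in the low-moment regime $1<q<2$, securing the Hölder estimate for $\phi$ at the origin used above.
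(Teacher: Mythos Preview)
Your route is genuinely different from the paper's, and the difference matters. The paper does \emph{not} compare the renewal measure of $\log Z_n$ to that of $S_n$; instead it introduces the Abel regularization $U_r(B)=\sum_n r^n\,\P_\S[\log Z_n\in B]$, writes $U_r(h_y)=I_1+I_2+I_3$ with
\[
I_1=\int_{I_\eta}e^{-iys}\,\frac{\widehat h(s)\,\phi(s)}{1-r\lambda(s)}\,ds,
\]
computes $\lim_{y\to\infty}\lim_{r\to 1}I_1=2\pi\widehat h(0)/\mu$ by literally following Breiman (Ch.~10), and kills $I_2$ (near $0$) via Proposition~\ref{prop:phi} and $I_3$ (away from $0$) via Lemma~\ref{lem:nonl}, both by Riemann--Lebesgue after passing $r\to1$. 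The Abel factor makes every Fourier inversion legitimate at finite $r$, and the main term $I_1$ only requires $\phi$ continuous with $\phi(0)=1$. In particular the paper never needs $(\phi(s)-1)/(1-\lambda(s))$ to be bounded or integrable, never needs a uniform renewal bound for $\nu_y$, and never invokes the classical key renewal theorem for $S_n$.

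Your direct comparison to $U=\sum_n\P(S_n\in\cdot)$ is a reasonable alternative, but it has a real gap in the range $1<q<2$ permitted by (H1). There Proposition~\ref{prop:phi} gives only $\phi\in C^0$, and the H\"older bound $|\phi(s)-1|=O(|s|^\delta)$ you invoke is not delivered by ``the contraction argument proving that proposition'': the estimate in Step~1 of that proof is $|\phi_{n+1}(s)-\phi_n(s)|\le C\rho^n$ uniformly in $s$, coming from a term $\P(U_n\setminus\S)$ that carries no factor of $|s|$, so no modulus of continuity for $\phi$ at $0$ drops out. Without it, $(\phi(s)-1)/(1-\lambda(s))$ need not be locally integrable and your $\psi$ may fail to lie in $L^1$, breaking the Riemann--Lebesgue step. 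For $q\ge2$ your argument is essentially sound (modulo the Fourier inversion for the infinite measure and the uniform bound $\sup_{y,a}\nu_y([a,a+1])<\infty$, both of which you flag but do not prove); the paper's Abel-summation device is precisely what buys you the full range $q>1$ without these extra ingredients. Finally, the ``fresh contraction argument'' you anticipate for $|\ES Z_n^{is}|$ away from $0$ is already in the paper as Lemma~\ref{lem:nonl}.
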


\section{Proof of Proposition \ref{prop:phi}}
Before we proving Proposition \ref{prop:phi}, we present a few auxiliary results. Some of them are quite involved. Even in the supercritical case, the process $Z$  may extinct and
the case $\P[Z_1=0]>0$ requires always a more careful analysis. In particular, then the proofs of Lemmas \ref{lem:lower} and \ref{zet} become considerably more complex.
So, they are postponed to the Appendix to make the whole argument more transparent.

\medskip

Nevertheless, the typical realization of the process grows exponentially fast on the survival set.  Lemmas \ref{lem:lower}, \ref{lem:lower ld1} and \ref{zet} below provide some qualitative analysis  of this observation.
We formulate first in our settings a version of a large deviation lemma.
 The conclusion of Lemma \ref{lem:lower} is much weaker than the large deviation result of \cite{BansBoninglower} but
 is sufficient for our purposes and allows us to work under weaker assumptions.

\begin{lem}\label{lem:lower}
Assume that   $\P[Z_1=0]>0$ and condition (H2) holds. 
Then there are $0 < \theta < \E \log A$,  $\beta >0$ and $C<\infty$ such that
\begin{equation}\label{eq:q10}
  \P\big[ 1\le Z_n \le e^{\theta n} \big] \le C  e^{-\beta n}.
\end{equation}
\end{lem}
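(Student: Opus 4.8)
The plan is to exploit a change of measure governed by a negative power of $A$, together with the fact that on the event $\{1\le Z_n\le e^{\theta n}\}$ with $\theta<\mu$ the process has ``anomalously small'' growth, which is a large-deviation event for the associated random walk $S_n=\log\Pi_n$. The starting point is the observation, recorded in the excerpt, that $A\ge 1-\gamma$ a.s.\ by (H2), hence $\E A^{-\eps}<\infty$ for every $\eps>0$ and, crucially, $\E A^{-\eps}<1$ for $\eps$ small enough because $\frac{d}{d\b}\E A^{\b}\big|_{\b=0}=\mu>0$. Fix such an $\eps$ and write $\theta<\mu$ to be chosen later.

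First I would split according to whether the quenched mean $\Pi_n$ is small or not. On the event $\{\Pi_n\le e^{\theta' n}\}$ for a suitable $\theta<\theta'<\mu$, we simply bound
\[
\P\big[\Pi_n\le e^{\theta' n}\big]=\P\big[S_n\le \theta' n\big]\le e^{\eps\theta' n}\,\E[e^{-\eps S_n}]=e^{\eps\theta' n}\big(\E A^{-\eps}\big)^n,
\]
by Markov's inequality applied to $e^{-\eps S_n}$; since $\E A^{-\eps}<1$ we may, after possibly shrinking $\eps$ and $\theta'$, make $e^{\eps\theta'}\E A^{-\eps}=:e^{-\b_1}<1$, giving exponential decay. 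On the complementary event $\{\Pi_n> e^{\theta' n}\}$ we instead use that $\{1\le Z_n\le e^{\theta n}\}$ forces $Z_n/\Pi_n\le e^{(\theta-\theta')n}$, i.e.\ the normalized process is exponentially small. Here I would use the elementary quenched bound $\P_{\mathcal Q}[0<Z_n\le \delta\Pi_n]$ is small when $\delta$ is small: conditionally on $\mathcal Q$, $Z_n/\Pi_n$ is a nonnegative martingale with mean $1$, and one can control the lower tail $\{Z_n\le \delta\Pi_n,\ Z_n\ge 1\}$ either via a second-moment / Paley--Zygmund argument using (H1) (the $p$-th moment of $Z_1/A_0$ is integrable) or via the generating-function inequalities of the type used in \cite{BansBoninglower}. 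Taking expectations over $\mathcal Q$ and combining the two regimes yields \eqref{eq:q10}.

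An alternative, and perhaps cleaner, route that I would actually prefer is the one-line Chernoff bound directly on $Z_n$ conditioned on $\mathcal Q$: since $\E[Z_n^{-\eps}\mathbf 1_{Z_n\ge 1}\,|\,\mathcal Q]\le$ (something comparable to) $\Pi_n^{-\eps}$ up to a multiplicative constant coming from the fluctuations of the martingale — this is exactly the content of the harder negative-moment estimates in \cite{BansBoninglower,damek:natert:kolesko} — one gets
\[
\P\big[1\le Z_n\le e^{\theta n}\big]\le e^{\eps\theta n}\,\E\big[Z_n^{-\eps}\mathbf 1_{Z_n\ge 1}\big]\le C\,e^{\eps\theta n}\big(\E A^{-\eps}\big)^n,
\]
and choosing $\theta<\mu$ small enough that $\eps\theta+\log\E A^{-\eps}<0$ finishes the proof with $\beta=-(\eps\theta+\log\E A^{-\eps})$ and any $0<\theta<\mu$ in the resulting range.

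The main obstacle is the uniform (in the environment) control of the lower tail of $Z_n/\Pi_n$ on the survival event, i.e.\ establishing $\E[Z_n^{-\eps}\mathbf 1_{Z_n\ge 1}\,|\,\mathcal Q]\lesssim \Pi_n^{-\eps}$ or the weaker two-regime version above; this is precisely where $\P[Z_1=0]>0$ makes life hard, because the naive martingale argument must be supplemented with the fact that each generation has a definite chance of not dying out by (H2), so that an exponentially small fraction of the initial population suffices to keep the process alive, and one must rule out the scenario where a thin surviving line produces only $O(1)$ individuals for a long time. Handling this carefully — presumably by a recursive/inductive estimate on the generating function $\E_{\mathcal Q}[s^{Z_n}]$ near $s=1$, using (H1) to absorb the contribution of large offspring numbers and (H2) to bound the contribution near extinction — is the technical heart of the argument, which is why the paper defers it to the Appendix.
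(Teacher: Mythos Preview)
Your Chernoff framework $\P[1\le Z_n\le e^{\theta n}]\le e^{\eps\theta n}\,\E[Z_n^{-\eps}\mathbf 1_{Z_n\ge 1}]$ is exactly what the paper uses, and you correctly isolate the crux: obtaining exponential decay of the annealed negative moment when $\P[Z_1=0]>0$. But the bound $\E[Z_n^{-\eps}\mathbf 1_{Z_n\ge 1}\mid\mathcal Q]\lesssim\Pi_n^{-\eps}$ that your ``cleaner route'' invokes is not available here (indeed it is essentially false quenched, precisely because of the thin-surviving-line scenario you describe), and neither Paley--Zygmund nor a generating-function recursion near $s=1$ will produce it; the papers you cite establish such estimates only under $Z_1\ge 1$ a.s. Your proposal therefore stops exactly where the real work begins.

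The paper's mechanism is different from both of your suggestions and does not use (H1) at all. It introduces a threshold $n_0$ and proves a one-step contraction $\E_n\big[(Z_1/n)^{-\eps}\mathbf 1_{Z_1\ge 1}\big]\le e^{-\beta_2}$ for all $n\ge n_0$: split off the event that fewer than $\sigma n$ of the $n$ initial particles reproduce (probability $\le C\delta^n$ by (H2) and a binomial-tail estimate), and on the complement use reverse Fatou plus the quenched LLN to get the limsup $\le\E A^{-\eps}<1$. Iterating this along paths with $Z_1,\dots,Z_n\ge n_0$ gives $\E_j[Z_n^{-\eps}\mathbf 1_{\{Z_1\ge n_0,\dots,Z_n\ge n_0\}}]\le n_0 e^{-\beta_2(n-1)}$ and hence \eqref{eq:q10} restricted to such paths. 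The remaining paths --- those that visit $\{1,\dots,n_0-1\}$ late --- are handled not by an internal argument but by \emph{quoting} the Bansaye--B\"oinghoff ``small positive values'' theorem, which gives $\rho:=-\lim n^{-1}\log\P_k[Z_n=j]>0$ for $k,j$ in a suitable class; one decomposes on the last time $\tau$ with $Z_\tau<n_0$ and bounds $\P[\tau=i]$ by $\P_k[1\le Z_{i-1}<n_0]\le Ce^{-\beta i}$ when $i>n/2$, and by the restricted estimate above when $i\le n/2$. This last-visit decomposition and the external input from \cite{BansBoningsmall} are the ingredients your outline is missing.
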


Denote by $U_n$ the event that BPRE $Z$ survives up to the $n$th generation:
$$U_n = \{Z_n>0\}.$$

\begin{lem}\label{lem:lower ld1}
Assume hypothesis (H2) is satisfied. 
  Then  for  any $\delta>0$ there are $C,\beta>0$ such that
  \begin{equation}\label{eq: unn}
    \E \big[ Z_n^{-\delta} \un
     \big]
    \le C e^{-\beta n}.
  \end{equation}
\end{lem}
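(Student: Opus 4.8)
The plan is to bound $\E[Z_n^{-\delta}\un]$ by splitting the survival event $U_n = \{Z_n>0\}$ according to whether the process has grown to at least $e^{\theta n}$ by generation $n$ or has stayed in the ``small'' range $[1,e^{\theta n}]$. On the first event, $Z_n^{-\delta}\le e^{-\delta\theta n}$ gives an immediate exponential bound with no further work; on the second event, $Z_n^{-\delta}\le 1$ (since $Z_n\ge 1$ there), so the contribution is at most $\P[1\le Z_n\le e^{\theta n}]$, which is exponentially small by Lemma \ref{lem:lower}. Concretely, with $\theta,\beta_0,C_0$ as in Lemma \ref{lem:lower},
\begin{align*}
\E\big[Z_n^{-\delta}\un\big]
&= \E\big[Z_n^{-\delta}\Ind{\{1\le Z_n\le e^{\theta n}\}}\big] + \E\big[Z_n^{-\delta}\Ind{\{Z_n> e^{\theta n}\}}\big]\\
&\le \P\big[1\le Z_n\le e^{\theta n}\big] + e^{-\delta\theta n}
\le C_0 e^{-\beta_0 n} + e^{-\delta\theta n},
\end{align*}
and taking $\beta = \min(\beta_0,\delta\theta)/2$ and a suitable $C$ finishes the case $\P[Z_1=0]>0$.

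The remaining case is $\P[Z_1=0]=0$, i.e.\ $Z_1\ge 1$ a.s., which under (H2) is the degenerate regime of the hypothesis. Here $Z_n$ is a.s.\ nondecreasing in the sense that extinction is impossible, and $Z_n\ge 1$ always, so $\un$ is trivial; one still needs the exponential lower growth $\P[Z_n\le e^{\theta n}]\le Ce^{-\beta n}$ for some $0<\theta<\mu$. This is classical: since $\log Z_n/n\to\mu>0$ on $\S=\Gamma$ (here survival is certain) and in fact one has exponential concentration of $\log Z_n$ below its mean — e.g.\ via the lower large deviation estimates of \cite{BansBoninglower} or directly from the fact that $Z_n\ge$ a Galton–Watson-type minorant built from the environment — the same two-part split applies verbatim. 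Alternatively one can quote Lemma \ref{lem:lower} in a form that already covers both cases, or simply note that when $Z_1\ge 1$ a.s. the bound \eqref{eq:q10} is a standard supercritical lower-deviation fact.

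The only genuine obstacle is making sure Lemma \ref{lem:lower} (or its analogue) is available with the constant $\theta$ chosen \emph{before} $\delta$: the exponent $\delta\theta$ in the second term must be positive, which it is for any fixed $\theta\in(0,\mu)$, so there is no circularity — $\theta$ depends only on the environment through (H1)–(H2), not on $\delta$. Everything else is the elementary inequality $Z_n^{-\delta}\le 1$ on $\{Z_n\ge 1\}$ together with $Z_n^{-\delta}\le e^{-\delta\theta n}$ on $\{Z_n>e^{\theta n}\}$. Thus the proof is short once Lemma \ref{lem:lower} is in hand; the technical heavy lifting has been isolated into that lemma (and deferred to the Appendix).
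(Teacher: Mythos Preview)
Your treatment of the case $\P[Z_1=0]>0$ is exactly the paper's argument: split on $\{1\le Z_n\le e^{\theta n}\}$ versus $\{Z_n>e^{\theta n}\}$ and apply Lemma~\ref{lem:lower}.

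For the complementary case $Z_1\ge 1$ a.s., however, the paper does \emph{not} attempt to recycle the same decomposition. Instead it gives a short self-contained argument: since $Z_1\ge 1$ a.s.\ and $\P[Z_1=1]<1$ (forced by $\mu>0$), one has $\E Z_1^{-\delta}<1$, and then a one-line Jensen/convexity computation conditioned on $Z_n$ yields the submultiplicative bound $\E Z_{n+1}^{-\delta}\le \E Z_n^{-\delta}\cdot\E Z_1^{-\delta}$, hence $\E Z_n^{-\delta}\le(\E Z_1^{-\delta})^n$. This bypasses any large-deviation input entirely and uses only (H2) and the standing assumption $\mu>0$.

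Your route for this case is not wrong in spirit, but as written it is incomplete: Lemma~\ref{lem:lower} is stated under the hypothesis $\P[Z_1=0]>0$, so you cannot simply quote it, and the appeal to ``classical'' lower-deviation facts or to \cite{BansBoninglower} would require checking moment hypotheses that go beyond what Lemma~\ref{lem:lower ld1} assumes. The paper's Jensen argument is both cleaner and requires strictly less, which is presumably why the authors split the two cases rather than unifying them as you propose.
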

\begin{proof}
{\sc Case 1.} Consider first the case when  $Z_1\ge 1$ a.s.
Then $\E Z_1^{-\delta}<1$ and we apply an argument borrowed from \cite{Grama:Liu} (see the proof of Lemma 2.4).
It is sufficient to justify
\begin{equation}\label{eq:ksr1}
  \E Z_n^{-\delta} \le \big(\E Z_1^{-\delta}\big)^n.
\end{equation}
We proceed by induction. Since the function $f(y)=y^{-\delta}$ is convex on $(0,\8)$, by the Jensen
inequality and \eqref{eq:s9}, we have
		\begin{align*}
		\E Z_{n+1}^{-\delta}=&\E \bigg[Z_n^{-\delta}\bigg (\sum _{i=1}^{Z_n}\frac{\xi_{i,n}}{Z_n}\bigg)^{-\delta}\bigg]
			\leq  \E \bigg[Z_n^{-\delta}\sum _{i=1}^{Z_n}\frac{\xi_{i,n}^{-\delta}}{Z_n}\bigg]\\
			=&\E \bigg[Z_n^{-\delta -1 }\E \bigg[\sum _{i=1}^{Z_n}\xi_{i,n}^{-\delta}\big |Z_n \bigg]\bigg]
			= \E Z_n^{-\delta } \cdot \E \xi_{i,n}^{-\delta}
			=\E Z_n^{-\delta }\cdot \E Z_1^{-\delta}.
		\end{align*}
This proves \eqref{eq:ksr1}. 
\medskip

{\sc Case 2.} Assume now that $\P[Z_1 = 0]>0$. Then
by Lemma \ref{lem:lower},
\begin{align*}
  \E \big[ Z_n^{-\delta} \un  \big] &  \le  \P [ 1\leq Z_n\leq e^{\theta n}] + \E\big[  Z_n^{-\delta } \Ind {\{ Z_n> e^{\theta n}\}} \big]\\
  & \le C  e^{-\beta n} + e^{- \delta\theta n },
\end{align*}
which completes the proof.
\end{proof}

The next result measures deviation of the process $Z_n$ from its conditional mean. Let us define
\begin{equation}\label{eq:delta}
  \Delta_n  = \frac{Z_{n+1}}{A_n Z_n}.
\end{equation}
Formula \eqref{eq:delta} makes sense only when $Z_n$ is nonzero and so we will always refer to this random variable on the set $U_{n}$. Since $\E[Z_{n+1}|{\mathcal Q}, Z_n] = A_n Z_n$ we expect some kind of concentration property and thus $\Delta_n$ should be close to 1.

\begin{lem}\label{lem:sr1}
If conditions (H1) and (H2) are satisfied, then
 there are constants $C>0$ and $\rho<1$ such that
$$
\E\big[ (\log Z_{n+1})^j |\log(A_nZ_n)|^k|\log \Delta_n|  \unn \big] \le C \rho^n
$$ for any $n, j,k \in\N_0$ and $j+k+1< q$.
\end{lem}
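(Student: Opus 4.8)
\textbf{Proof proposal for Lemma \ref{lem:sr1}.}

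The plan is to reduce the estimate to quantities that are already controlled, namely negative moments of $Z_n$ on the survival event (Lemma \ref{lem:lower ld1}) and moment bounds coming from hypothesis (H1), and to handle the growth factors $(\log Z_{n+1})^j$ and $|\log(A_nZ_n)|^k$ by splitting according to the size of $Z_n$. First I would write $\log Z_{n+1} = \log(A_nZ_n) + \log\Delta_n$ and $\log(A_nZ_n) = \log A_n + \log Z_n$, so that after expanding the product all terms are of the form $(\log A_n)^{a}(\log Z_n)^{b}(\log\Delta_n)^{c}$ times the indicator $\unn$, with $a+b+c \le j+k+1 < q$ and $c\ge 1$. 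Since $\unn \le \un$, it suffices to bound $\E\big[|\log A_n|^{a}|\log Z_n|^{b}|\log\Delta_n|^{c}\,\un\big]$ for each such triple.

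The key step is the concentration estimate for $\Delta_n$. Conditionally on $\mathcal Q$ and $Z_n$, the variable $\Delta_n = Z_{n+1}/(A_nZ_n)$ is an average of $Z_n$ i.i.d.\ copies of $\xi_{1,n}/A_n$, which has conditional mean $1$. I would show that for any exponent $p\in(1,2]$ as in (H1) one has a bound of the form
$$
\E\big[\,|\log\Delta_n|^{c}\,\big|\,\mathcal Q, Z_n\big] \le C\, Z_n^{-(p-1)}\big(1 + (\tfrac{Z_1}{A_0})\text{-moment}\big)
$$
on $U_n$; the mechanism is that $|\log\Delta_n|$ is small unless $\Delta_n$ is far from $1$, and $\P_{\mathcal Q}[|\Delta_n - 1|>t\,|\,Z_n]$ decays like $Z_n^{-(p-1)}t^{-p}$ by the Marcinkiewicz--Zygmund / von Bahr--Esseen inequality applied to the centered sums $\sum_{i\le Z_n}(\xi_{i,n}/A_n - 1)$, while on the region $\Delta_n$ near $0$ (possible since $Z_{n+1}$ can vanish) one uses that $\Delta_n \ge 1/(A_nZ_n)$ there, contributing a logarithmic factor absorbed by a slightly smaller power of $Z_n$. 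Any power $|\log\Delta_n|^c$ is then integrable against this tail, at the cost of replacing $Z_n^{-(p-1)}$ by $Z_n^{-\delta}$ for some $\delta>0$.

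Plugging this in and taking expectations, I would be left with controlling
$$
\E\big[\,|\log A_n|^{a}\,|\log Z_n|^{b}\,Z_n^{-\delta}\,\un\,\big].
$$
Here $\log A_n$ is independent of $(Z_n, \un)$, and $\E|\log A_n|^{a}<\infty$ by (H1) (as $a<q$). For the factor $|\log Z_n|^{b}Z_n^{-\delta}$ on $U_n$, I would split into $\{1\le Z_n\le e^{\theta n}\}$ and $\{Z_n>e^{\theta n}\}$ using Lemma \ref{lem:lower}: on the first event $|\log Z_n|^b Z_n^{-\delta}$ is bounded by a polynomial in $n$ times an event of probability $\le Ce^{-\beta n}$, while on the second event $|\log Z_n|^b Z_n^{-\delta} \le C_\eps Z_n^{-\delta+\eps}$ and $\E[Z_n^{-(\delta-\eps)}\un]\le Ce^{-\beta' n}$ by Lemma \ref{lem:lower ld1} (for $\eps$ small). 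Either way the bound is $O(\rho^n)$ for some $\rho<1$, and summing the finitely many triples $(a,b,c)$ (their number depends only on $j,k$, not on $n$) gives $C\rho^n$ as claimed.

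\textbf{Main obstacle.} The delicate point is the conditional estimate on $|\log\Delta_n|$ and in particular the behaviour near $\Delta_n = 0$: since (H2) allows $\P_{\mathcal Q}[Z_1=0]>0$, the event $Z_{n+1}=0$ has positive conditional probability, so $\log\Delta_n$ is not bounded below and one must quantify, via (H2) (which gives $\P_{\mathcal Q}[Z_{n+1}=0\,|\,Z_n]\le\gamma^{Z_n}$) together with the lower bound $A_n\ge 1-\gamma$ from \eqref{eq:a}, that the contribution of this region is exponentially small in $Z_n$ and hence harmless after integrating against the (exponentially small on $U_n$) negative moments of $Z_n$. Getting the interplay of the powers $p$, $q$, $\delta$, $\theta$, $\beta$ to close is where the bookkeeping lies; the structural argument above is otherwise routine.
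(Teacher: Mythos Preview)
Your overall strategy --- Marcinkiewicz--Zygmund for $\Delta_n-1$ together with the negative-moment bound of Lemma~\ref{lem:lower ld1} --- is precisely the engine the paper uses. But there is a genuine gap in the reduction, and it is exactly at the point you flag as the ``main obstacle''.

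The step ``since $\unn\le\un$, it suffices to bound $\E\big[|\log A_n|^{a}(\log Z_n)^{b}|\log\Delta_n|^{c}\,\un\big]$'' is invalid: on $U_n\setminus U_{n+1}$ (positive probability whenever $\P[Z_1=0]>0$) one has $\Delta_n=0$ and $|\log\Delta_n|^c=+\infty$, so the expectation you propose to bound is infinite. Your obstacle paragraph does not repair this: the estimate $\P_{\mathcal Q}[Z_{n+1}=0\mid Z_n]\le\gamma^{Z_n}$ is worthless against an infinite integrand, and the inequality $\Delta_n\ge 1/(A_nZ_n)$ you invoke holds only on $U_{n+1}$, not on $U_n$. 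One must keep $\unn$ throughout, and then the clean conditional bound $\E[\,|\log\Delta_n|^c\mid\mathcal Q,Z_n]\le C Z_n^{-\delta}(\cdots)$ you aim for is no longer available in that form: on $\{\Delta_n<1/2\}\cap U_{n+1}$ the size of $|\log\Delta_n|$ is governed by $\log(A_nZ_n)$, which reinjects logarithmic factors rather than producing a negative power of $Z_n$ directly.

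The paper sidesteps this by never attempting a conditional bound on $|\log\Delta_n|^c$. It first proves the single estimate
\[
\E\big[\,|\log(A_nZ_n)|^{r}\,|\Delta_n-1|^{p}\,\un\big]\le C\rho^n
\]
(via Marcinkiewicz--Zygmund plus Lemma~\ref{lem:lower ld1}, just as you outline), and then converts every $|\log\Delta_n|$ to $|\Delta_n-1|^p$ by a pointwise split: on $\{\Delta_n\ge 1/2\}$ one has $|\log\Delta_n|^{j+1}\le C|\Delta_n-1|^p$ for $j\ge 1$ (with a Jensen step when $j=0$); on $\{\Delta_n<1/2\}\cap U_{n+1}$ one has both $|\log\Delta_n|\le|\log(A_nZ_n)|$ and $|\Delta_n-1|^p\ge 2^{-p}$, so one inserts the factor $2^p|\Delta_n-1|^p$ for free and again lands on the displayed estimate. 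Keeping $|\log(A_nZ_n)|^r$ intact (rather than separating $\log A_n$ from $\log Z_n$) is what makes this closure clean and avoids the exponent bookkeeping you anticipate.
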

\begin{proof}
We start with proving that
 there are constants $C>0$ and $\rho<1$ such that for any $r \in [0,q]$
\begin{equation}\label{eq:srrr}
  \E\big[   |\log(A_n Z_n)|^r |\Delta_n-1|^p {\bf 1}_{U_n}
  \big] \le C \rho^n,
\end{equation}
where the parameters $q,p$ are defined in condition (H1).

In view of \eqref{eq:s9} and \eqref{eq:delta}, on the set $U_{n}$, we may write
$$
\Delta_n = 	\frac{Z_{n+1}}{A_n Z_n}= \frac{1}{Z_n}\sum _{i=1}^{Z_{n}}\frac{\xi_{i,n}}{A_n}.
$$
Now we apply
the Marcinkiewicz-Zygmund inequality (see e.g. Theorem 5.1 in \cite{Gut})
to estimate the conditional expectation of $|\Delta_n-1|$ on the set $U_n = \{Z_n>0\}$:
\begin{align*}
 \E\big[ |\log A_n|^r |\Delta_n-1|^p\big|  Z_n\big]&=
\E \bigg[\frac{ |\log A_n|^r }{Z_n^p}\bigg| \sum _{i=1}^{Z_{n}}\bigg (\frac{\xi_{i,n}}{A_n}-1\bigg)
	\bigg|^p  \Big| Z_n\bigg]\\
	&\leq C  Z_n^{1-p} \E \bigg [ |\log A_n|^r \bigg |\frac{\xi_{1,n}}{A_n}-1
	\bigg |^p \bigg ]
\end{align*}
Therefore, invoking independence of $Z_n$ and $(A_n,\xi_{1,n})$, since $U_n = {\bf 1}_{\{Z_n > 0\}}$ we obtain
\begin{equation*}
\begin{split}
\E\big[   |\log(A_nZ_n)|^{r} & | \Delta_n-1|^p  \un  \big]
\le
      C \E\bigg[ \Big(  \big|\log A_n\big|^{r} + \big(\log Z_n\big)^{r}\Big)
      Z_n^{1-p}\un \bigg |\frac{\xi_{1,n}}{A_n}-1 \bigg |^p \bigg ]\\
      &\le C \E\Big[ \big( 1+  \big(\log Z_n\big)^{r}\big) Z_n^{1-p}  \un \Big]\cdot
       \E\Big[ \big(  1+ \big|\log A_0\big|^{r} \big)  \bigg |\frac{Z_1}{A_0}-1 \bigg |^p \bigg ],
\end{split}
\end{equation*} thus combining condition (H1) with  Lemma \ref{lem:lower ld1}  we obtain  \eqref{eq:srrr}.

To prove the Lemma, note that since $Z_{n+1} = \Delta_n A_n Z_n$, on the set $U_{n+1}$ we have
  $$
  \log Z_{n+1} \le |\log \Delta_n|  {\bf 1}_{\{\Delta_n \ge 1/2\}} + |\log (A_n Z_n)|.
  $$ Moreover on the set $\{\Delta_n\ge 1/2\}\cap U_n$ we have $Z_{n+1}>0$, hence $\unn = \un$.
  Therefore,  it is sufficient to prove
  \begin{equation}\label{eq:wt1}
    \E\Big[   |\log(A_nZ_n)|^{k} |\log \Delta_n|^{j+1} {\bf 1}_{\{\Delta_n \ge 1/2\}} \un  \Big] < C\rho^n
  \end{equation} for $k < q $
  and
  \begin{equation}\label{eq:wt2}
    \E\Big[   |\log(A_nZ_n)|^{k} |\log \Delta_n| {\bf 1}_{\{\Delta_n < 1/2\}} \unn  \Big] < C\rho^n
  \end{equation} for $k+1\le q$.
  We start with inequality \eqref{eq:wt1}. Assume first that  $j\ge 1$ and take
 $p \in (1,2)$ as in condition (H1). Then there is a constant $C<\infty$ such that
  \begin{equation}\label{eq:g6}
|\log(1+x)|^{j+1} \le C |x|^p
\end{equation} for any  $x\ge -1/2$. The above inequality gives
  \begin{equation}\label{eq:sr3}
  \E\big[   |\log(A_nZ_n)|^{k} |\log \Delta_n|^{j+1} {\bf 1}_{\{\Delta_n \ge 1/2\}} \un  \big]
  \le C
  \E\big[   |\log(A_nZ_n)|^{k} | \Delta_n-1|^{p}  \un  \big],
  \end{equation}
thus \eqref{eq:srrr} entails \eqref{eq:wt1} if $j\ge 1$.

If $j=0$, then choose $1<r<p$ such that  $rk\leq q$. Using the Jensen inequality we obtain
$$
    \E\big[   |\log(A_nZ_n)|^{k} |\log \Delta_n| {\bf 1}_{\{\Delta_n \ge 1/2\}} \un  \big]
\le \Big(    \E\big[   |\log(A_nZ_n)|^{rk} |\log \Delta_n|^r {\bf 1}_{\{\Delta_n \ge 1/2\}} \un  \big]\Big)^{\frac 1r}.
$$ Now, one can proceed as in the previous case, that is, first apply inequality \eqref{eq:g6}
(with $p$ replaced by $r$)
and then again \eqref{eq:srrr}. Thus we obtain \eqref{eq:wt1}.

To estimate \eqref{eq:wt2} note that   on the set
$\{\Delta_n <1/2\}\cap U_{n+1}$
we have
$$
\frac 12 > \Delta_n = \frac{Z_{n+1}}{A_n Z_n} \ge \frac{1}{A_n Z_n}.
$$
Therefore,
 $|\log(A_nZ_n)|\ge |\log \Delta_n|$,
       $2^p|\Delta_n-1|^p\ge 1$ and so  
\begin{align*}
       \E\Big[   |\log(A_nZ_n)|^{k} |\log \Delta_n| {\bf 1}_{\{\Delta_n < 1/2\}} \unn  \Big]
 & \le 2^p \E\big[   |\log(A_nZ_n)|^{k+1} |\Delta_n-1|^p \un  \big].
 \end{align*}
Referring again to  \eqref{eq:srrr} we complete the proof of \eqref{eq:wt2} and the Lemma follows.
\end{proof}

\begin{lem}\label{zet}
	Suppose that hypotheses (H1) and (H2) are satisfied. Then for every positive integer $k<q$ there is $C=C(k)$ such that
	\begin{equation}\label{logmoment}
	\E \big[ (\log Z_n)^k \Ind {U_n}\big] \leq Cn^{k}.
	\end{equation}
\end{lem}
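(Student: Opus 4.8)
The plan is to prove the bound $\E[(\log Z_n)^k \Ind{U_n}] \le Cn^k$ by induction on $n$, using the multiplicative decomposition $Z_{n+1} = \Delta_n A_n Z_n$ and the concentration estimates for $\Delta_n$ already established. The base case $n=0$ is trivial since $Z_0=1$. For the inductive step, I would start from the pointwise inequality valid on $U_{n+1}$:
$$
\log Z_{n+1} \le |\log \Delta_n|\,{\bf 1}_{\{\Delta_n \ge 1/2\}} + |\log A_n| + \log Z_n \cdot \Ind{U_n},
$$
(using that on $\{\Delta_n<1/2\}\cap U_{n+1}$ we have $|\log\Delta_n|\le|\log(A_nZ_n)|$, as exploited in the proof of Lemma~\ref{lem:sr1}), so that raising to the $k$th power and expanding the multinomial gives
$$
(\log Z_{n+1})^k \Ind{U_{n+1}} \le \sum_{a+b+c=k} \binom{k}{a,b,c} |\log\Delta_n|^a\,{\bf 1}_{\{\Delta_n\ge1/2\}}\,|\log A_n|^b\,(\log Z_n)^c\,\Ind{U_n} + (\text{error terms}).
$$

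The key observation is that the term with $a=b=0$, namely $(\log Z_n)^k \Ind{U_n}$, is exactly the quantity we control by the inductive hypothesis, bounded by $Cn^k$; all other terms carry at least one factor of $|\log\Delta_n|$ or $|\log A_n|$. For the terms with $b\ge 1$ but $a=0$: one factors the expectation using independence of $Z_n$ from $(A_n,\xi_{1,n})$ (exactly as in Lemma~\ref{lem:sr1}), obtaining $\E[(\log Z_n)^c \Ind{U_n}]\cdot \E[|\log A_n|^b]$, which is at most $Cn^c \le Cn^{k-1}$ by induction together with the moment bound from (H1) — and summing over $n$ from $0$ up to the current index, the $n^{k-1}$ terms accumulate to at most $Cn^k$. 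For the terms with $a\ge 1$: here I would invoke Lemma~\ref{lem:sr1} with the roles of $j,k$ there set to $(a-1, \text{something involving }b\text{ and }c)$ — but Lemma~\ref{lem:sr1} only handles $(\log Z_{n+1})^j$ times $|\log(A_nZ_n)|^k|\log\Delta_n|$, so I may instead need a variant that also carries a power of $\log Z_n$; alternatively, since these terms are $O(\rho^n)$ (exponentially small), they sum to a bounded quantity, well within $O(n^k)$.

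The main obstacle I anticipate is making the bookkeeping in the multinomial expansion clean: one must be careful that $|\log(A_nZ_n)|^k$ (which appears naturally) does not simply become $(\log A_n + \log Z_n)^k$ with an uncontrolled $(\log Z_n)^k$ factor attached to an already-large term. The resolution is that any surviving $(\log Z_n)^k$ factor either comes with no extra $\Delta_n$ or $A_n$ factor (handled by induction directly) or comes multiplied by $|\log\Delta_n|^a$ with $a\ge1$, which by Lemma~\ref{lem:sr1}-type estimates is exponentially small. Thus one should organize the proof as: (i) isolate the leading term $(\log Z_n)^k\Ind{U_n}$; (ii) bound all terms linear-or-higher in $\log A_n$ but with no $\log\Delta_n$ by factorization + (H1) + induction, yielding a contribution of order $\sum_{m\le n} m^{k-1} \asymp n^k$; (iii) bound all terms involving $\log\Delta_n$ by the exponential estimate \eqref{eq:srrr} and its consequences from Lemma~\ref{lem:sr1}, yielding $O(1)$. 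Summing, $\E[(\log Z_{n+1})^k\Ind{U_{n+1}}] \le \E[(\log Z_n)^k\Ind{U_n}] + C n^{k-1} \le C(n+1)^k$, closing the induction.
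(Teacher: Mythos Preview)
Your proposal is correct and follows essentially the same route as the paper: decompose $\log Z_{n+1}$ via $Z_{n+1}=\Delta_n A_n Z_n$, control all terms carrying a factor $|\log\Delta_n|$ using the Marcinkiewicz--Zygmund estimate \eqref{eq:srrr}/\eqref{eq:wt1}, factor out $|\log A_n|$ by independence and (H1), and close the recursion $\E[(\log Z_{n+1})^k\Ind{U_{n+1}}]\le \E[(\log Z_n)^k\Ind{U_n}]+Cn^{k-1}$. The paper's bookkeeping is slightly cleaner---it splits at $\{\Delta_n\ge 1\}$ rather than $\{\Delta_n\ge 1/2\}$ and replaces your full multinomial expansion by two applications of the inequality $(x+y)^k\le x^k+2^k(x^{k-1}y+y^k)$, together with an outer induction on $k$---but the substance is the same.
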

The proof of Lemma \ref{zet} will be postponed to the Appendix.

\begin{lem}\label{lem:lower ld2}
Assume hypotheses (H1) and (H2) are satisfied.
  Then there are constants $\beta, C>0$ such that
  	\begin{equation}\label{eq:un111}
	\P (U_n\setminus \mathcal{S})\leq Ce^{-\beta n}
	\end{equation}
and   for every integer $k\in [1,q)$
  \begin{equation}\label{eq: un11}
    \E\big[ (\log Z_n)^{k}  {\bf 1}_ {U_n\setminus \S }\big] \le C e^{-\beta n}
  \end{equation}
  and
  \begin{equation}\label{eq: un1}
    \E\big[ |\log A_n |^k  {\bf 1}_ {U_n\setminus \S }\big] \le C e^{-\beta n}.
  \end{equation}
\end{lem}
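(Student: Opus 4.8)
The plan is to reduce everything to the event $U_n\setminus\mathcal S$, which is the event that $Z$ is alive at generation $n$ but will eventually die out. The key observation is that on this event the process must, at some future point, drop from a large value back down to $0$; combining this with the exponential growth of the quenched mean $\Pi_n$ (which follows from \eqref{mainass}) one expects this event to be exponentially rare. First I would establish \eqref{eq:un111}. Write $\mathcal S^c\cap U_n = U_n\setminus\mathcal S$ and note $\P(U_n\setminus\mathcal S) = \E[\P_{\mathcal Q}(Z_n>0,\ Z\text{ dies out})]$. Splitting according to whether $Z_n\le e^{\theta n}$ or $Z_n > e^{\theta n}$ (with $\theta$ from Lemma \ref{lem:lower}): on the first event Lemma \ref{lem:lower} already gives $\P[1\le Z_n\le e^{\theta n}]\le Ce^{-\beta n}$; on the second event we use that, conditionally on $\mathcal Q$ and on $Z_n = m$ with $m$ large, extinction requires all $m$ independent subtrees to die, and each survives with probability bounded below — more precisely, by (H2) and \cite{athreya:karlin:1971} the quenched extinction probability starting from one individual, $\varrho_n := \P_{\mathcal Q_{\ge n}}[\text{extinction}]$, satisfies $\varrho_n\le \varrho < 1$ for a deterministic $\varrho$ on a set of environments of full measure (since $\P_{\mathcal Q}[Z_1=0]<\gamma$ forces $\E\log A$-type control), so the quenched probability of extinction from $Z_n>e^{\theta n}$ is at most $\varrho^{\,e^{\theta n}}$, which is super-exponentially small. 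Hence $\P(U_n\setminus\mathcal S)\le Ce^{-\beta n} + \E[\varrho^{\,e^{\theta n}}\mathbf 1_{\{Z_n>e^{\theta n}\}}]\le C e^{-\beta n}$, possibly after shrinking $\beta$.

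Next I would prove \eqref{eq: un11}. The idea is Cauchy–Schwarz (or Hölder) against the bound \eqref{logmoment} from Lemma \ref{zet}: for an integer $k\in[1,q)$ pick $k'$ with $k<k'<q$ and write
\begin{align*}
\E\big[(\log Z_n)^k\,\mathbf 1_{U_n\setminus\mathcal S}\big]
&\le \Big(\E\big[(\log Z_n)^{k'}\mathbf 1_{U_n}\big]\Big)^{k/k'}\,\P(U_n\setminus\mathcal S)^{1-k/k'}\\
&\le \big(Cn^{k'}\big)^{k/k'}\big(Ce^{-\beta n}\big)^{1-k/k'}\le C' n^{k} e^{-\beta' n},
\end{align*}
and absorbing the polynomial factor into a slightly smaller exponential rate gives \eqref{eq: un11}. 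For \eqref{eq: un1} the same Hölder split works, except that the high-moment control of $|\log A_n|$ comes not from Lemma \ref{zet} but directly from (H1): $\E|\log A_0|^{k'}<\infty$ for $k'<q$, and $A_n$ is independent of the environment up to time $n-1$; since the event $U_n\setminus\mathcal S$ depends on the whole environment one must be a little careful, but conditioning on $A_n$ and using that $\P(U_n\setminus\mathcal S \mid A_n)$ still enjoys an exponential bound (the argument above is insensitive to a single fixed reproduction law at step $n$), one gets $\E[|\log A_n|^k\mathbf 1_{U_n\setminus\mathcal S}]\le \big(\E|\log A_n|^{k'}\big)^{k/k'}\cdot(Ce^{-\beta n})^{1-k/k'}$.

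I expect the main obstacle to be the first step, \eqref{eq:un111}, and specifically the uniform (in $n$, and almost sure in the environment) lower bound on the quenched survival probability of a single subtree rooted at generation $n$. One must rule out that the tail environment $\mathcal Q_{\ge n}$ is "almost subcritical" with non-negligible probability; this is exactly where hypothesis (H2) — equivalently \eqref{eq:a}, $A\ge 1-\gamma$ a.s. — does the work, since it prevents $\P_{\mathcal Q}[Z_1=0]$ from approaching $1$ and, combined with \eqref{mainass}, keeps the quenched process genuinely supercritical. Making this quantitative (e.g. via \cite{athreya:karlin:1971} or a direct second-moment/Chebyshev argument on $Z_1$ under $\P_{\mathcal Q}$) is the delicate point; once $\P(U_n\setminus\mathcal S)$ is controlled, \eqref{eq: un11} and \eqref{eq: un1} are routine Hölder interpolations against Lemma \ref{zet} and (H1). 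Since the excerpt defers Lemmas \ref{lem:lower} and \ref{zet} to the Appendix, it would be natural to defer the proof of this Lemma there as well, or to give only the interpolation step here and cite the appendix for \eqref{eq:un111}.
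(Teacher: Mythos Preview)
Your H\"older interpolations for \eqref{eq: un11} and \eqref{eq: un1} are exactly what the paper does (and your worry about conditioning on $A_n$ is unnecessary: H\"older does not require any independence). The divergence is in \eqref{eq:un111}.

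You split $U_n\setminus\mathcal S$ according to $\{Z_n\le e^{\theta n}\}$ versus $\{Z_n>e^{\theta n}\}$ and, on the second event, appeal to a uniform bound $\varrho_n\le\varrho<1$ on the quenched extinction probability of a single subtree rooted at time $n$. You correctly identify this as the delicate point, but the bound you want is in fact \emph{false} under (H1)--(H2) alone. Nothing prevents the environment from containing, with positive probability, a reproduction law supported on $\{0,1\}$ with $Q(0)$ close to $\gamma$ (so $A$ close to $1-\gamma<1$); a run of $k$ such environments at the start of $\mathcal Q_{\ge n}$ forces the quenched extinction probability to be at least $1-(1-\gamma)^k$, hence arbitrarily close to $1$. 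One could try to rescue the argument by showing instead that $\E\big[\varrho_n^{\,m}\big]$ decays sufficiently fast in $m$ (polynomial decay would suffice, since $m=e^{\theta n}$), but this is genuine extra work that your sketch does not supply.

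The paper sidesteps the issue with a different decomposition. Writing
\[
U_n\setminus\mathcal S=\bigcup_{k\ge n}(U_k\setminus U_{k+1}),
\]
it suffices to bound the \emph{one-step} extinction events $\P(U_k\setminus U_{k+1})$ exponentially in $k$. Splitting now on $\{Z_k\le e^{\theta k}\}$ versus $\{Z_k>e^{\theta k}\}$, the first part is handled by Lemma~\ref{lem:lower}, while on the second part one only needs
\[
\P\big[Z_{k+1}=0\mid Z_k,\mathcal Q\big]=\P_{\mathcal Q}[\xi_{1,k}=0]^{Z_k}\le \gamma^{Z_k}\le \gamma^{e^{\theta k}},
\]
which is immediate from (H2). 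Summing the resulting geometric series over $k\ge n$ gives \eqref{eq:un111}. The point is that controlling extinction \emph{in one step} from a large population needs nothing beyond (H2), whereas controlling \emph{eventual} extinction from a large population at time $n$ drags in the whole future environment.
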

\begin{proof}
If $Z_1\ge 1$ a.s.,  the process survives with probability 1 and
 $U_n=\S$, so the Lemma  holds trivially.

 Assume now that condition $\P[Z_1 = 0]>0$ holds. Since $\S = \bigcap _{n=1}^{\8}U_n$,
 we have
 $$
 U_n\setminus \S = \bigcup_{k=n}^{\infty} (U_k\setminus U_{k+1}).
 $$
Thus to prove \eqref{eq:un111}, it is enough to show that
	\begin{equation*}
	\P (U_n\setminus U_{n+1})\leq Ce^{-\beta n}.
	\end{equation*}
Indeed, by Lemma \ref{lem:lower} and (H2), we have
\begin{align*}
   \P [U_n\setminus U_{n+1}] &\le \P\big[ 1\le Z_n \le e^{\theta n} \big] + \P\big[ Z_n > e^{\theta n} \mbox{ and } Z_{n+1}=0  \big]\\
   &\le C e^{-\beta n} + \E\big[ \P[\xi_{n,1} = 0]^{Z_n} {\bf 1}_{\{ Z_n > e^{\theta n} \}}  \big]\\
   &\le C e^{-\beta n} + \gamma^{e^{\theta n}} \le 2C e^{-\beta n}
\end{align*}
and the proof of \eqref{eq:un111} is complete.

Inequalities \eqref{eq: un11} and \eqref{eq: un1} follow now from \eqref{eq:un111}, Lemma \ref{zet} and the H\"older inequality with parameters $s,t>1$ such that $1/s+1/t=1$ and
$ks <  q$. Indeed,
$$
  \E[ |\log A_n|^{k}  {\bf 1}_ {U_n\setminus \S}]
  \le  \E[ |\log A_n|^{{k}s}]^{1/s} \cdot  \P[ U_n\setminus \S  ]^{1/t} \le C e^{-\beta n/t},
$$
$$
\E[ (\log Z_n)^{k}  {\bf 1}_ {U_n\setminus \S}]
\le  \E[ (\log Z_n)^{{k}s} {\bf 1}_{U_n}]^{1/s} \cdot  \P[ U_n\setminus \S  ]^{1/t} \le C n^{(ks+1)/s}e^{-\beta n/ t}.
$$ Adjusting the value of $\beta$, i.e. replacing $\beta/t$ by $\beta$, we conclude \eqref{eq: un11} and \eqref{eq: un1}.
\end{proof}


\begin{proof}[Proof of Proposition \ref{prop:phi}]
{\sc Step 1.} 
 We are going to prove  that
  \begin{equation}\label{eq:s1}
    \big| \phi_{n+1}(s) -\phi_n(s) \big| < C \rho_0^n,\quad  s\in I_{\eta_0}
  \end{equation} for some $\rho_0<1$, $C>0$ and $\eta_0>0$.  This entails existence of $\phi $. Indeed,
  	$\phi_n$ is a Cauchy sequence uniformly with respect to  $s\in I_{\eta_0}$, and so
  it converges to a continuous function $\phi$ on $I_{\eta_0}$.

    Since $(Z_n,{\bf 1}_{U_n})$ and $A_n$ are independent,
\begin{equation}\label{eq:kpt1}
\begin{split}
\lambda(s) \E[Z_n^{is }{\bf 1}_\S] &= \E[A_n^{is}]\E[Z_n^{is}\un]  - \lambda(s)\E[Z_n^{is} {\bf 1}_{U_n\setminus \S}]\\
&= \E[(A_nZ_n)^{is}\unn] + \E[(A_nZ_n)^{is}{\bf 1}_{U_n\setminus U_{n+1}}] - \lambda(s)\E[Z_n^{is} {\bf 1}_{U_n\setminus \S}].\end{split}
\end{equation}

Combining \eqref{eq:kpt1} with the well known inequality
\begin{equation}\label{eq:g4}
\big| e^{is} - e^{it}  \big| \le |s-t|, \qquad  s,t\in\R,
\end{equation}
 we obtain
  \begin{equation*}
  \begin{split}
    |\lambda(s)|^{n+1} |\phi_{n+1}(s) - \phi_n(s)| &=
    \big|\ES[Z_{n+1}^{is}] - \lambda(s)\ES[Z_n^{is}]\big|
=    \frac 1{\P[\S]}\big| \E[Z_{n+1}^{is}{\bf 1}_\S] - \lambda(s)\E[Z_n^{is}{\bf 1}_\S]\big|\\
&\le     C \big(
 \E[ \big| Z_{n+1}^{is} - (A_nZ_n)^{is}\big| \unn]  + \P[U_n\setminus \S]\big)\\
    &\le C\big(  s \E\big[|\log \Delta_n|\unn\big] + \P[U_n\setminus \S]\big).
  \end{split}
  \end{equation*}
  Lemma \ref{lem:sr1} and Lemma \ref{lem:lower ld2} imply that the above expression is bounded by $C \rho^n$ for some $\rho<1$.
Since  the function $\lambda$ is continuous and $\lambda(0)=1$, there exists a small neighbourhood of $0$ such that $\eqref{eq:s1}$ is satisfied with some
$\rho_0 < \rho$.

{\sc Step 2.} To prove \eqref{eq:o3} and differentiability of $\phi$, we proceed by induction. We will prove that for any  $j\le K$
there are $C>0$, $\eta_j>0$, $\rho_j<1$ such that
\begin{equation}\label{eq:pt6}
\big |\lambda(s)\big |^{n+1} \big| \phi ^{(j)}_{n+1}(s) - \phi ^{(j)}_n(s)  \big |\leq Cn^j\rho_j^n, \quad n\in \N,  s\in I_{\eta_{j}}.
\end{equation} If $j=1$ the above inequality implies  that the sequence of derivatives $\phi'_n$ converges uniformly to some function $\psi$
on $I_{\eta_j}$. Therefore $\phi'=\psi$ and  $\phi$ is continuously differentiable  (see e.g. Theorem 14.7.1 in \cite{Tao:Analysis}).
The same inductive  argument guarantees that $\phi\in C^{K}(I_{\eta_{K}})$ and since $\lambda$ is continuous with $\lambda(0)=1$, inequality
\eqref{eq:o3} follows directly from \eqref{eq:pt6} with $ \max_{j\le {K}}\rho_j\leq \rho <1$ and $\eta < \min_{j\le {K}} \eta_j$.

{\bf Proof of \eqref{eq:pt6} for $j=1$.}
%
Let us start with the following formula
\begin{equation}\label{eq:g3}
\lambda(s)^{n+1}\big( \phi_{n+1}(s) - \phi_n(s)
\big) = \ES[Z_{n+1}^{is}] - \lambda(s)\ES[Z_n^{is}].
\end{equation}
Denote by $L_n(s)$ and $R_n(s)$ the function on the left and the right side of the above equation. Differentiating  the function $L_n(s)$
we obtain
$$
L_n'(s) = (n+1)\lambda'(s)\lambda(s)^n (\phi_{n+1}(s) - \phi_n(s)) + \lambda(s)^{n+1}(\phi'_{n+1}(s) - \phi'_n(s))
$$
Since $|\lambda'(s)| \le \E |\log A| <\infty$, in view of \eqref{eq:s1},
\begin{align*}
|(n+1)\lambda '(s)\lambda (s)^n (\phi_{n+1}(s) - \phi_n(s))|&\leq C(n+1)\rho _0^n|\lambda (s)|^n\E |\log A|,\quad \mbox{for}\ s\in I_{\eta _0}\\
&\leq C\rho _1^n, \quad \mbox{for}\ s\in I_{\eta _1}
\end{align*}
where $\eta _1$ is taken small enough to guarantee $\rho _0|\lambda (s)|\leq \rho _1<1$ for $s\in I_{\eta _1}$.
Thus, it is sufficient to ensure that $R_n'(s)$ is exponentially small as $n$
tends to $+\infty$ uniformly for $s\in I_{\eta_1}$. We have
$$
R_n'(s) = \E_\S[i \log Z_{n+1}\cdot Z_{n+1}^{is}] - \lambda'(s)\E_\S[Z_n^{is}] - \lambda(s)\E_\S[ i\log Z_n \cdot Z_n^{is}].
$$
Moreover,
\begin{equation*}
\E_\S[i \log Z_{n+1}\cdot Z_{n+1}^{is}]=I_1-I_2,\end{equation*}
where
\begin{align*}
I_1=&\frac{1}{\P [\S]} \E[i \log Z_{n+1}\cdot Z_{n+1}^{is}{\bf 1}_{U_{n+1}}]\\
I_2=&\frac{1}{\P [\S]}\E[i \log Z_{n+1}\cdot Z_{n+1}^{is}{\bf 1}_{U_{n+1}\setminus \S}]
\end{align*}
and by Lemma \ref{lem:lower ld2}, $|I_2|\leq  C\rho^n$.
Similarly, recalling  independence of $(Z_n,{\bf 1}_{U_n})$ and $A_n$, we obtain
\begin{equation*}
\lambda'(s)\E_\S[Z_n^{is}] + \lambda(s)\E_\S[ i\log Z_n \cdot Z_n^{is}]= I_3+I_4,
\end{equation*}
where
\begin{align*}
I_3=&\frac{1}{\P [\S]}\left (  \E[i\log A_n\cdot (A_nZ_n)^{is}{\bf 1}_{U_{n}}] + \E[ i\log Z_n \cdot (A_nZ_n)^{is}{\bf 1}_{U_{n}}]\right )\\
I_4=& \lambda'(s)\frac{1}{\P [\S]}\E[Z_n^{is}{\bf 1}_{U_{n}\setminus \S}]+
\lambda(s)\frac{1}{\P [\S]}\E[i\log Z_n \cdot Z_n^{is}{\bf 1}_{U_{n}\setminus \S}]
\end{align*}
and $|I_4|\leq C\rho ^n$ by Lemma \ref{lem:lower ld2}. Moreover, $I_3=I_5+I_6$, where
\begin{equation*}
I_5= \frac{1}{\P [\S]}\left (  \E[i\log (A_nZ_n)\cdot (A_nZ_n)^{is}{\bf 1}_{U_{n+1}}]\right )
\end{equation*}
and
\begin{equation*}
|I_6|=\big|\E_\S[ i\log (A_n Z_n) \cdot (A_nZ_n)^{is}{\bf 1}_{U_n \setminus U_{n+1}}]\big)\big|  \le C \rho^n
\end{equation*}
again by Lemma \ref{lem:lower ld2}. Therefore, 
$$
| R_n'(s) - I_1+I_5| 
\le C \rho^n
$$ for some $\rho < 1$ and $C<\infty$.
Finally, using inequality \eqref{eq:g4}, we have
\begin{align*}
  \big| \E[i \log Z_{n+1}&\cdot Z_{n+1}^{is}{\bf 1}_{U_{n+1}}] - 
\E[ i\log (A_nZ_n) \cdot (A_nZ_n)^{is}{\bf 1}_{U_{n+1}}]\big|\\
  &\le \big| \E[ |\log Z_{n+1} - \log (A_n Z_n)|{\bf 1}_{U_{n+1}}] - 
\E[ |\log (A_nZ_n)| \cdot| Z_{n+1}^{is} - (A_nZ_n)^{is}{\bf 1}_{U_{n+1}}]\big|\\
& \le \big| \E[|\log \Delta_n| {\bf 1}_{U_{n+1}}]\big|
+ s \E[ |\log (A_nZ_n)| \cdot| \log \Delta_n |{\bf 1}_{U_{n+1}}]\big| \le C\rho^n,
\end{align*}
where the last inequality follows  from Lemma \ref{lem:sr1}.
Thus we obtain  \eqref{eq:pt6} for $j=1$ with $\rho _1=\max (\rho _0, \rho)$.



\medskip
{\bf Proof of \eqref{eq:pt6} for general $k\le K$.} Suppose that \eqref{eq:pt6} holds for $j\le k-1$
and recall \eqref{eq:g3}. Let
 $L_n^{(k)}(s), R_n^{(k)}(s)$, be the k th derivatives of the left and right hand side of \eqref{eq:g3}. Using the binomial formula for $L_n^{(k)}$,
to prove \eqref{eq:pt6} we need to ensure
\begin{equation}\label{eq:kp1}
\begin{split}
  \big| L_n^{(k)}(s) - &\lambda(s)^{n+1}\big( \phi^{(k)}_{n+1}(s) - \phi_n^{(k)}(s)\big) \big|\\
  &= \bigg|\sum_{j=0}^{k-1} {k\choose j} \big(\lambda(s)^{n+1}\big)^{(k-j)}\big( \phi^{(j)}_{n+1}(s) - \phi_n^{(j)}(s)
\big)\bigg| \le C n^k \rho^n
\end{split}
\end{equation}
and
\begin{equation}\label{eq:kp2}
\big|R_n^{(k)}(s)\big| \le C \rho^n
\end{equation}
for some $\rho <1$.

{\bf Proof of \eqref{eq:kp1}.}
 Note that
	\begin{equation}\label{eq:pt10}
	\bigg|\frac{(\lambda (s)^n)^{(p)}}{\lambda (s)^n}\bigg|\leq C_pn^{p},
	\end{equation} for sufficiently small $s$. Indeed, proceeding by  induction on $p$, we have
	\begin{align*}
	\bigg|\frac{(\lambda (s)^n)^{(p+1)}}{\lambda (s)^n}\bigg|=&\bigg| \frac{n(\lambda (s)^{n-1}\lambda '(s))^{(p)}}{\lambda (s)^n}\bigg|	
	=n\bigg| \sum _{j=0}^p
	{p\choose j}\frac{(\lambda (s)^{n-1})^{(j)}}{\lambda (s)^{n-1}}\frac{\lambda (s)^{(p-j+1)}}{\lambda (s)}\bigg|\\
	\leq &n\bigg(\sum _{j=0}^p
	{p\choose j}C_j(n-1)^j\bigg)\sup _{j,s}\bigg| \frac{\lambda (s)^{(p-j+1)}}{\lambda (s)}\bigg|\leq C_{p+1}n^{p+1},
	\end{align*}
which gives \eqref{eq:pt10}. Finally, by the induction hypothesis \eqref{eq:pt6} and \eqref{eq:pt10}
\begin{align*}
\left | \left (\lambda(s)^{n+1}\right )^{(k-j)}\right | \big| \phi ^{(j)}_{n+1}(s) - \phi ^{(j)}_n(s)  \big |=&
\left | \frac{\left (\lambda(s)^{n+1}\right )^{(k-j)}}{\lambda (s)^{n+1}}\right | \big |\lambda (s)^{n+1} \big |\big| \phi ^{(j)}_{n+1}(s) - \phi ^{(j)}_n(s)  \big |\\
&\leq C_{k-j}n^{k-j}Cn^j\rho ^n,
\end{align*} which gives \eqref{eq:kp1}.

{\bf Proof of \eqref{eq:kp2}.} Using ${\bf 1}_\S  = {\bf 1}_{U_n} - {\bf 1}_{U_n\setminus\S}$, we write $R_n^{(k)}$ as
 $$ R_n^{(k)}(s) = \frac 1{\P[\S]}\big(
  \E[Z_{n+1}^{is}{\bf 1}_{U_{n+1}}] - \E[(A_nZ_n)^{is}\un]
  -\E[Z_{n+1}^{is}{\bf 1}_{ U_{n+1} \setminus \S}]
   + \lambda(s) \E[Z_n^{is}{\bf 1}_{U_n\setminus \S}]
  \big)^{(k)}$$
  In view of Lemma \ref{lem:lower ld2} and (H1), the third and the fourth terms are exponentially small, i.e. they can by bounded by $C\rho^n$ uniformly for all $s\in \R$.
To estimate the difference of the remaining terms, invoking Lemma \ref{lem:lower ld2}, we write
\begin{align*}
\Big| \big( \E[Z_{n+1}^{is}&{\bf 1}_{U_{n+1}}] - \E[(A_nZ_n)^{is}\un] \big)^{(k)}\Big|\\
& \le   \Big| \E\big[ (i\log Z_{n+1})^k Z_{n+1}^{is}\unn\big] - \E\big[(i\log(A_nZ_n))^k(A_nZ_n)^{is}\unn \big] \Big|\\
&\qquad \qquad +\E\big[|\log(A_nZ_n))^k| {\bf 1}_{U_n\setminus U_{n+1}}] \\
  &\le  \E\big[ \big| (\log Z_{n+1})^k -(\log(A_nZ_n))^k\big| \unn\big]\\
  &\qquad\qquad +\E\big[ |\log (A_nZ_n)|^k \big| Z_{n+1}^{is} - (A_nZ_n)^{is}\big|\unn\big] +  C\rho^n.
  \end{align*}
  Now we apply the equality $a^k-b^k = (a-b)(a^{k-1} + a^{k-2}b+\ldots + b^{k-1})$ and \eqref{eq:g4}. Hence
  \begin{align*}
  \Big| \big( \E[Z_{n+1}^{is}{\bf 1}_{U_{n+1}}] - \E[(A_nZ_n)^{is}\un] \big)^{(k)}\Big|
  &\le  \sum_{j=0}^{k-1} \E\big[ (\log Z_{n+1})^j |\log(A_nZ_n)|^{k-1-j}|\log \Delta_n| \unn \big] \\
  &+
   s \E\Big[ |\log (A_nZ_n)|^k|\log \Delta_n|\unn  \Big] + C \rho^n.
\end{align*}
All the three terms can be bounded by $C \rho^n$ in view of 
  Lemma \ref{lem:sr1}, 
  which proves \eqref{eq:kp2}.
Hence the induction argument is complete and we conclude the Proposition.
\end{proof}

\section{Proofs of the limit theorems}
\begin{proof}[Proof of Theorem \ref{thm:clt}]
Since the second moment of $\log A$ is finite, the central limit theorem holds for the random walk $S_n = \log \Pi_n$.   In terms of characteristic functions we have
$$
\lambda^n\Big( \frac{s}{\sqrt n\sigma}\Big)e^{-is \sqrt n \mu/\sigma} = \E\Big[e^{is\; \frac{  S_n - n\mu }{\sqrt n \sigma}}\Big]
\to e^{-s^2/2} \quad \mbox{ as } n\to\infty
$$ for every $s\in \R$. Proposition \ref{prop:phi} entails that the sequence of functions $\phi_n(s) = \frac{\ES[Z_n^{is}]}{\lambda^n(s)}$
converges uniformly on some interval $I_{\eta}$ to a continuous function $\phi$ and $\phi(0)=1$. Therefore for every $s\in \R$
$$
\ES\Big[e^{is\; \frac{\log Z_n - n\mu }{\sqrt n \sigma}}\Big] =
 \phi_n\Big( \frac{s}{\sqrt n\sigma}\Big)\lambda^n\Big( \frac{s}{\sqrt n\sigma}\Big)e^{-is \sqrt n \mu/\sigma} \to e^{-s^2/2} \quad \mbox{ as } n\to \infty.
$$ Thus we conclude the result.
\end{proof}

The next lemma will be used in the proof of Theorem \ref{thm:edg}.
\begin{lem}\label{lem:nonl}
Assume that conditions (H1) and (H2) are satisfied, $\E (\log A)^2<\infty$ and
$\log A$ is nonlattice. Then, given $0<M<\infty$, there are $C_1, \chi >0$ and $\rho_1\in (0,1)$ such that
\begin{equation}\label{eq:q1}
  \sup_{ |s|\in[n^{-1\slash 3}, M]} \big|\ES Z^{is}_n \big|\le C_1e^{-\chi n^{1\slash 12}}.
\end{equation}
Moreover if \eqref{eq:pp1} holds, then for any $\delta, \gamma > 0$ there are $C_2>0$ and $\rho_2\in (0,1)$ such that
\begin{equation}\label{eq:q2}
  \sup_{|s|\in [\delta, n^\gamma]} \big|\ES Z^{is}_n \big| \le C_2\rho_2^n.
\end{equation}
\end{lem}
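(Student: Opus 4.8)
The plan is to transfer the corresponding estimates for the random walk $S_n=\log\Pi_n$ to $\log Z_n$ by writing $\ES Z_n^{is}=\phi_n(s)\lambda(s)^n$ on a small interval and controlling $\phi_n$ via Proposition \ref{prop:phi}, while on the complement we argue directly. For the nonlattice conclusion \eqref{eq:q1}: since $\log A$ is nonlattice, $|\lambda(s)|<1$ for $s\neq 0$, and by continuity there is $c>0$ with $|\lambda(s)|\le 1-cs^2$ near $0$ and $|\lambda(s)|\le 1-c$ on any fixed annulus $[\delta,M]$. Hence $|\lambda(s)^n|\le e^{-c n s^2}$ for small $s$. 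On the range $|s|\in[n^{-1/3},\eta]$ (with $\eta$ from Proposition \ref{prop:phi}) we get $|\phi_n(s)\lambda(s)^n|\le (|\phi(s)|+C\rho^n)e^{-cns^2}\le C'e^{-c n^{1/3}}$, which is far stronger than the claimed $e^{-\chi n^{1/12}}$. On the fixed annulus $|s|\in[\eta,M]$ we do not have Proposition \ref{prop:phi}, so we need the second, direct argument described below. The exponent $n^{1/12}$ (rather than something exponential) is what must come out of that direct argument, so the splitting point $n^{-1/3}$ and the loss to $n^{1/12}$ are dictated by it.

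For the direct bound on a fixed interval $|s|\in[\delta,M']$ away from $0$, I would iterate the recursion used in the proof of Proposition \ref{prop:phi}. Writing, as in \eqref{eq:kpt1}, $\lambda(s)\E[Z_n^{is}\mathbf 1_\S]=\E[(A_nZ_n)^{is}\mathbf 1_{U_{n+1}}]+(\text{error terms bounded by }C\rho^n)$, and using independence of $A_n$ from $(Z_n,\mathbf 1_{U_n})$, one obtains a relation of the form
\begin{equation*}
\ES[Z_{n+1}^{is}]=\lambda(s)\ES[Z_n^{is}]+E_n(s),\qquad |E_n(s)|\le C\big(s\,\E[|\log\Delta_n|\mathbf 1_{U_{n+1}}]+\P[U_n\setminus\S]\big),
\end{equation*}
so that $|\phi_{n+1}(s)-\phi_n(s)|\le C\rho^n/|\lambda(s)|^{n+1}$. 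Combined with $|\lambda(s)|\le 1-c$ on the annulus this only gives $|\phi_n(s)|$ growing, which is useless; the fix is instead to run a \emph{multiplicative} contraction. Concretely, I would show that for a suitably large block length $\ell$ and any starting index $m$, $\big|\ES[Z_{m+\ell}^{is}]\big|\le |\lambda(s)|^{\ell}\,\big|\ES[Z_m^{is}]\big|+C\rho^m$, then iterate over $\lfloor n/\ell\rfloor$ blocks, choosing $\ell\asymp n^{1/2}$ or similar so that the accumulated error $\sum\rho^{m}$ stays controlled while $|\lambda(s)|^{\ell\lfloor n/\ell\rfloor}\le e^{-c\ell\lfloor n/\ell\rfloor}$; balancing the block length against the error budget is what produces the $n^{1/12}$ exponent. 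The key inputs are Lemma \ref{lem:sr1} (giving $\E[|\log\Delta_n|\mathbf 1_{U_{n+1}}]\le C\rho^n$) and Lemma \ref{lem:lower ld2} (for $\P[U_n\setminus\S]$), exactly as in Step 1 of the proof of Proposition \ref{prop:phi}.

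For \eqref{eq:q2}, the extra hypothesis \eqref{eq:pp1}, $\limsup_{|s|\to\infty}|\lambda(s)|<1$, upgrades the previous estimate: now $|\lambda(s)|\le 1-c$ \emph{uniformly} for all $|s|\ge\delta$, not just on a bounded annulus. Running the same block iteration, but now over $n/\ell$ blocks with $\ell$ a fixed constant, yields $|\ES Z_n^{is}|\le (1-c)^{cn}+C\sum_{m}\rho^m\le C_2\rho_2^n$ for some $\rho_2<1$, uniformly in $|s|\in[\delta,n^\gamma]$ — the polynomial upper cutoff $n^\gamma$ is harmless because the bound on $|\lambda(s)|$ and on the error terms is uniform in $s$ over that whole range.

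The main obstacle will be the direct block estimate: one must show the multiplicative contraction $\big|\ES[Z_{m+\ell}^{is}]\big|\lesssim |\lambda(s)|^{\ell}\big|\ES[Z_m^{is}]\big|+C\rho^m$ with constants independent of $s$ on the relevant $s$-range, which requires carefully iterating the one-step identity \eqref{eq:kpt1} $\ell$ times and bounding the telescoped error by $\E[|\log\Delta_m|\mathbf 1_{U_{m+1}}]+\cdots$ using Lemmas \ref{lem:sr1} and \ref{lem:lower ld2}; getting the dependence on $\ell$ right (so the error does not blow up faster than $|\lambda(s)|^{-\ell}$) and then optimizing $\ell=\ell(n)$ to land on the stated exponents is the delicate bookkeeping. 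Everything else — the behaviour of $\lambda$ near $0$ and the transfer via $\phi_n$ — is standard Fourier analysis of i.i.d. sums.
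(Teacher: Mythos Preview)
Your strategy would work, but it is more roundabout than the paper's, and the complications you anticipate (block-length balancing producing the $n^{1/12}$ exponent) are illusory.

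The paper uses a single telescope, uniform in $s$. Set $m=\lfloor n/4\rfloor$ and interpolate $Z_n\rightsquigarrow Z_m\Pi_{m,n}$ via $Z_j\Pi_{j,n}$, $m\le j\le n$, where $\Pi_{j,n}=\prod_{k=j}^{n-1}A_k$. Each step satisfies
\[
\big|\ES(Z_{j+1}\Pi_{j+1,n})^{is}-\ES(Z_j\Pi_{j,n})^{is}\big|\le C(1+|s|)\rho^j
\]
by exactly the inputs you cite (Lemmas \ref{lem:sr1} and \ref{lem:lower ld2}), so the telescoped sum is $\le C(1+|s|)\rho^{n/4}$. At the base, independence of $(Z_m,\mathbf 1_{U_m})$ from $\Pi_{m,n}$ gives $|\E[(Z_m\Pi_{m,n})^{is}\mathbf 1_{U_m}]|\le |\lambda(s)|^{3n/4}$, and passing from $\mathbf 1_{U_m}$ to $\mathbf 1_\S$ costs another $\P[U_m\setminus\S]\le C\rho^{n/4}$. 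Hence
\[
\big|\ES Z_n^{is}\big|\le C(1+|s|)\,\rho^{n/4}+C\,|\lambda(s)|^{3n/4},
\]
and both \eqref{eq:q1} and \eqref{eq:q2} follow by inserting the relevant bound on $|\lambda(s)|$: the quadratic expansion $|\lambda(s)|\le 1-cs^2$ on $[n^{-1/3},\varepsilon]$ yields $|\lambda(s)|^{3n/4}\le e^{-c'n^{1/3}}$ there, while the nonlattice assumption (resp.\ \eqref{eq:pp1}) handles $[\varepsilon,M]$ (resp.\ $[\delta,n^\gamma]$). No range splitting, no blocks, no use of Proposition~\ref{prop:phi}.

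Your one-step recursion $\ES[Z_{n+1}^{is}]=\lambda(s)\ES[Z_n^{is}]+E_n(s)$ with $|E_n|\le C(1+|s|)\rho^n$ is correct and is essentially the same telescope unrolled differently; iterating it all the way (take $\ell=1$) already gives
\[
\big|\ES Z_n^{is}\big|\le|\lambda(s)|^n+C(1+|s|)\sum_{k=0}^{n-1}|\lambda(s)|^{n-1-k}\rho^k,
\]
and on any region where $|\lambda(s)|\le 1-c$ the second term is at most $Cn(1+|s|)\max(1-c,\rho)^{n-1}$, which is exponentially small with no block-length optimization whatsoever. So there is no ``delicate bookkeeping'': the stated exponent $n^{1/12}$ in \eqref{eq:q1} is simply much weaker than what either argument actually delivers, not the outcome of a balancing act. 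Relatedly, in your sketch for \eqref{eq:q2} the accumulated error you wrote, $C\sum_m\rho^m$, is a constant and could not possibly give $C_2\rho_2^n$; you dropped precisely the damping factors $|\lambda(s)|^{n-1-k}$ that make the error decay.
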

\begin{proof}
The argument is partially inspired by the proof of Lemma 2.5 in \cite{Grama:Liu}.
Let $\Pi_{j,n} = \prod_{k=j}^{n-1} A_k$ and $m=\lfloor n/4\rfloor$.
First we prove that there are constants $\rho<1$ and $C<\infty$
such that  
\begin{equation}\label{eq:sr33}
  \big| \ES (Z_{j+1}\Pi_{j+1,n})^{is} - \ES (Z_j\Pi_{j,n})^{is} \big| \le C (1 + |s|)\rho^{n\slash 4}
\end{equation}	
for $m\le j < n$,
and
\begin{equation}\label{eq:sr34}
  \big| \ES (Z_m\Pi_{m,n})^{is} \big| \le C ( \rho^{n\slash 4} + |\lambda(s)|^{3n/4}\big).
\end{equation}
Inequality \eqref{eq:sr33} follows directly from \eqref{eq:g4} and Lemmas  \ref{lem:sr1},
\ref{lem:lower ld2}:
\begin{align*}
\big| \ES (Z_{j+1}&\Pi_{j+1,n})^{is} - \ES (Z_j\Pi_{j,n})^{is} \big|\\
&\le
\frac 1{\P[\S]}\big| \E \big( e^{is \log (Z_{j+1}\Pi_{j+1,n})} - e^{is \log (Z_{j}\Pi_{j,n})} \big) {\bf 1}_{U_{j+1}} \big|
+  C \big| \P(U_{j+1}\setminus \S ) \big|\\
&\le
C |s| \E |\log \Delta_j| {\bf 1}_{U_{j+1}} + C \rho^j \le   C (1+|s|)\rho^j
\le   C (1+|s|)\rho^{n\slash 4}.
\end{align*}
To estimate \eqref{eq:sr34} we apply Lemma \ref{lem:lower ld2} and for $m=n\slash 4$ independence of $Z^{is}_m\cdot {\bf 1}_{U_m}$ and~$\Pi_{m,n}$ 
\begin{align*}
\big| \ES (Z_m\Pi_{m,n})^{is} \big|
&\le C \Big( \big| \E[ (Z_m\Pi_{m,n})^{is} {\bf 1}_{U_m} ]  \big|
+\big| \E[ (Z_m\Pi_{m,n})^{is} {\bf 1}_{U_m\setminus \S } ]  \big|
\Big)\\
&\le C\big( \big| \E [\Pi_{m,n}^{is}] \big| + \P[U_m\setminus \S ] \big)\\
&  \le C(    \big| \lambda(s)\big|^{3n/4} + \rho^{n\slash 4})
\end{align*}
which completes the proof of \eqref{eq:sr34}.

Since both inequalities \eqref{eq:sr33} and \eqref{eq:sr34} hold, we obtain
\begin{align*}
\big| \ES Z_n^{is} \big| &\le
\sum_{j=m}^{n-1}   \big| \ES (Z_{j+1}\Pi_{j+1,n})^{is} - \ES (Z_j\Pi_{j,n})^{is} \big| +
\big| \ES (Z_m\Pi_{m,n})^{is} \big|\\
&\le    C (1 + |s|)\rho^{n\slash 4} +  C |\lambda(s)|^{3n/4}.
\end{align*}
Now we expand $\lambda (s)$. Let $d=\E (\log A)^2> \mu ^2 $. We have
\begin{equation*}
\lambda (s)= 1+i\mu s -\frac{1}{2}ds^2 +iO(s^3)
\end{equation*}	
and so
\begin{equation*}
|\lambda (s)|^2= \left (1-\frac{1}{2}ds^2\right )^2 +(\mu s+O(s^3))^2=1-(d-\mu ^2)s^2+O(s^4).
\end{equation*}
Therefore, for some $\chi >0$,
\begin{equation*}
|\lambda (s)|\leq 1-2\chi s^2, \quad \mbox{when } \ |s|\leq \eps,
\end{equation*}
for $\eps $ sufficiently small. Hence for $n^{-1\slash 3}\le |s|\le \eps $,
\begin{equation*}
|\lambda (s)|^{3n\slash 4}\leq \left (1-2\chi n^{-2\slash 3}\right )^{3n\slash 4}\leq e^{-\chi n^{1\slash 12}}
\end{equation*}
for $n$ large enough.
 Since $\log A$ is nonlattice and the function $\lambda$ is continuous, we have
$\sup_{|s|\in[\delta, M]} |\lambda(s)| < 1$ which together with the last inequality entails \eqref{eq:q1}.  If we assume
additionally \eqref{eq:pp1}, then $\sup_{|s| \ge \delta} |\lambda(s)| < 1$ and thus we conclude \eqref{eq:q2}.

\end{proof}
\begin{proof}[Proof of Theorem \ref{thm:edg}]
We are going to adopt the proof 
of Theorem XVI.4.1 in \cite{feller:1971} based on the Berry-Esseen inequality (Lemma XVI.3.2 in \cite{feller:1971}).
For this kind of approach see also \cite{FMN}. Let $F$ be a distribution function with Fourier transform  $\widehat f$.
Suppose that a non negative function $g$ is integrable and bounded by $m$, 
with continuously differentiable Fourier transform $\widehat g$, $\widehat g(0)=1$,
$\widehat g'(0)=0$. 
Then by the Berry-Esseen inequality
\begin{equation}\label{eq:s2}
  |F(x) - G(x)| \le \frac 1{\pi} \int_{-T}^T \bigg|\frac{\widehat f(t) - \widehat g(t)}{t}\bigg| dt + \frac{24m}{\pi T},
\end{equation}
for all $x\in \R$  and for all $T>0$.
Denote by $F_n$ the distribution function of $\frac{\log Z_n - n\mu}{\sigma \sqrt n}$ conditioned on ${\mathcal S}$ and let $\widehat f_n$ be its characteristic function.

{\bf Step 1. Expansion of $\widehat f_n$.} First we will find an expansion of $\widehat f_n$. Writing $\widehat f_n$ in terms of the functions $\phi_n$ and $\lambda$ we have:
\begin{equation}\label{eq:ksr4}
\widehat f_n(t) = \ES\Big[ e^{it\frac{\log Z_n - n\mu}{\sigma \sqrt n}}  \Big]
=\ES\Big[ Z_n^{it/\sigma \sqrt n}\Big] e^{-\frac{it\mu\sqrt n}{\sigma}} = \phi_n\Big(
\frac{t}{\sqrt n\sigma} \Big) \lambda^n\Big( \frac{t}{\sqrt n\sigma }\Big)e^{-\frac{it\mu\sqrt n}{\sigma}}.
\end{equation}
 Recall that $\phi_n(s) = \E_{\mathcal S}[Z_n^{is}]/\lambda^n(s)$ and $\Lambda(s) = \log \lambda(s) = \log \E A^{is}$.  
By Proposition \ref{prop:phi}, in a small neighbourhood $I_\eta$ of 0, we  may expand $\phi_n$ into the Taylor series and approximating $\phi_n$ by $\phi$ we have
\begin{equation}\label{expphi}
 \phi_n(s)  = \sum_{k=0}^{r-1} \frac{\phi_n^{(k)}(0)s^k}{k!} + O(s^r)= 1+ \sum_{k=1}^{r-1} \frac{\phi^{(k)}(0)s^k}{k!} + O(s^r) + o(s \rho^n),\quad s\in I_{\eta}\end{equation}
for some $\rho<1$.
Since $\Lambda(0) = 0 $, $\Lambda'(0) = \lambda'(0) = i\mu $ and $\Lambda''(0) = \lambda''(0) - \lambda'(0)^2 = -\sigma^2$ we write also the Taylor expansion of $\Lambda$ near 0:
\begin{equation}\label{explam}
\Lambda(s) = \sum_{k=0}^r \frac{\Lambda^{(k)}(0)}{k!} s^k + o(s^r) = i\mu s - \frac{\sigma^2}{2}s^2 +
\sum_{k=3}^r \frac{\Lambda^{(k)}(0)}{k!} s^k + o(s^r).
\end{equation}
Combining both \eqref{expphi} and \eqref{explam} with the Taylor formula for $e^x$, for $|t|\leq \s n^{1\slash 6}$, we have 
\begin{align*}
\widehat   f_n(t) &=
  \phi_n\bigg( \frac t{\sigma \sqrt n} \bigg) e^{n\big( \Lambda(t/\sigma \sqrt n) -  it\mu / \sqrt n  \sigma   \big)}\\
  &=
 \bigg(
 1 + \sum_{k=1}^{r-1} \frac{\phi^{k}(0)}{k!} \bigg(\frac{t}{\sigma \sqrt n}\bigg)^k + O\bigg(\frac{t^r}{n^{r/2}}\bigg) + o\bigg( \frac {t\rho^n}{\sqrt n}  \bigg)
 \bigg)\\ &\qquad\qquad \times  e^{-\frac{t^2}2 +\sum_{k=3}^r \frac{\Lambda^{(k)}(0)}{\sigma^k k!} \frac{t^k}{n^{k/2-1}} + nO(t^{r+1}/ n^{(r+1)/2}) }\\
& =  e^{-\frac{t^2}2}\left ( 1 + \sum_{k=3}^r \frac{p_k(t)}{n^{k/2-1}} + (t^3+t^{r^2+r+1})o(n^{-r/2+1})\right), 
\end{align*} where $p_k = \sum_{j=1}^{s_k} a_{j,k}t^j$ are some polynomials of order $s_k\leq r(r-1)$,
whose coefficients $a_{j,k}$ depends only on derivatives of $\phi$ and $\Lambda$ at zero, e.g. $p_3(t) = \frac{t}{\sigma}\phi'(0)  + \frac{t^3}{6\sigma^3}\Lambda'''(0)$.

Indeed, let $W_r=\sum_{k=3}^r \frac{\Lambda^{(k)}(0)}{\sigma^k k!} \frac{t^k}{n^{k/2-1}}$. For $|t|\leq \s n^{1\slash 6}$, both $|W_r|$ and $nO(t^{r+1}/ n^{(r+1)/2})$ are bounded uniformly in $n$. Hence we may write
\begin{align*}
e^{W_r +
nO(t^{r+1}/ n^{(r+1)/2}) }=& \left (1+ \sum _{l=1}^{r-2}W_r^l/l!+ O(W_r^{r-1})\right )\left (1+nO(t^{r+1}/ n^{(r+1)/2})\right )\\
=&1+\sum_{k=3}^r \frac{\wt p_k(t)}{n^{k/2-1}} + (t^3+t^{r^2+1})o(n^{-r/2+1}),
\end{align*}
where the degree of $\wt p_k$ does not exceed $r(r-2)$. The latter combined with the above expansion of $\phi _n$ allows us to conclude.

{\bf Step 2. Construction of function $G_r$.}
Now we explain how to construct the polynomials $Q_k$ defining the function $G_r$ in \eqref{eq:pp2}. We want to find a function $G_r$ such that
\begin{equation}\label{eq:f8}
  \widehat g_r(t) = e^{-\frac{t^2}2}\bigg( 1 + \sum_{k=3}^r \frac{p_k(t)}{n^{k/2-1}}\bigg)
\end{equation} for $g_r = G_r'$.
For this purpose we recall the definition of Hermite polynomials:
\begin{equation}\label{eq:f6}
  H_j(x):= (-1)^j e^{\frac{x^2}2} \frac{\partial^j}{\partial x^j} \Big( e^{-x^2/2} \Big),\qquad j\in\N.
\end{equation}
An immediate calculation entails: $H_1(x)=x$, $H_2(x)=x^2-1$, $H_3(x)=x^3-3x$. Then
\begin{equation}\label{eq:f7}
  (\psi H_j)\widehat{\ } (t) = (it)^j e^{-\frac{t^2}{2}},
\end{equation}
where $\psi(x) = \frac1{\sqrt{2\pi}} e^{-x^2/2}$ is the density of a standard Gaussian variable.
Putting \newline $q_k(x) = \sum_{j=1}^{s_k} (-i)^j a_{j,k} H_j(x) $, where $a_{j,k}$ are coefficients of polynomials $p_k$,
we define
$$
g_r(x) = \psi(x) \bigg(      1+ \sum_{k=3}^r \frac{q_k(x)}{n^{k/2-1}}\bigg).
$$ Since the Fourier transform is linear, invoking \eqref{eq:f7}, we conclude equation \eqref{eq:f8}.
 $g_r$ is integrable, $\widehat g_r$ is continuously differentiable, moreover  $\widehat g_r(0)=1$ and
$\widehat g_r'(0)=0$, therefore we define $G_r(x) = \int_{-\infty}^x g_r(y)dy$.
 We still need to define $Q_k$. 
They are primitive functions to $q_k$ and they can be expressed
in terms of Hermite functions
\begin{equation}\label{qpoly}
Q_k(x) = -\sum_{j=1}^{s_k} (-i)^j  a_{j,k} H_{j-1}(x),
\end{equation}
thanks to the formula
$$\big( H_{j-1}(x) \psi(x)  \big)' = - H_j(x) \psi(x), \qquad \mbox{ for } j\ge 1,
$$ which is a consequence of
\eqref{eq:f6}. For example, one can compute for $k=3$, $Q_3(x) = \frac{i\phi'(0)}{\sigma} + \frac{i\Lambda'''(0)}{6\sigma^3} H_2(x)$.

{\bf Step 3. Conclusion. } Fix an $\varepsilon >0$.  Appealing to \eqref{eq:s2} with $T = a n^{r/2-1}$ and $a = 24m_r/\pi\varepsilon$ (here $m_r = \sup_{x\in\R}
g_r(x)$)
 \begin{align*}
\big| F_n(x) - G_r(x) \big| &\le \frac 1{\pi} \int_{\{ |t| <  \sigma  n^{1\slash 6} \}} \bigg|\frac{\widehat f_n(t)-\widehat g_r(t)}{t}\bigg|dt\\
&+ \frac 1{\pi} \int_{\{  \sigma n^{1\slash 6} \le |t| < T\}} \bigg|\frac{\widehat f_n(t)-\widehat g_r(t)}{t}\bigg|dt + \frac{24m_r}{\pi T}\\ &:= I_1+I_2+I_3.
\end{align*}
The third term $I_3$ is equal to  $\varepsilon n^{-r/2+1}$. To estimate  $I_1$  notice that
$$
|\widehat f_n(t) - \widehat g_r(t)| =
e^{-\frac{t^2}2} (t^2+t^{r^2+r}) o(n^{-r/2+1}).
$$ Hence $I_1 = o(n^{-r/2+1})$.
Finally,  in view Lemma \ref{lem:nonl} and \eqref{eq:f8}, we bound $I_2$ by
 \begin{align*}
I_2 &\le  \frac{1}{ \sigma  \pi \sqrt n} \int_{ \{n^{-1\slash 3} \le |s| < T/\sigma \sqrt n\}} \big|\ES Z_n^{is}\big| ds + C e^{- \sigma ^2 n^{1\slash 3}\slash 2}\\
 &\le C T e^{-\chi n^{1\slash 12}}  + C e^{- \sigma ^2 n^{1\slash 3}\slash 2} = C (1/\varepsilon + 1) o(n^{-r/2+1}).
 \end{align*}
 Notice that for $r\geq 4$, $T/\sqrt{n}\to \8 $ and so we need the condition \eqref{eq:pp1} and the second part of Lemma \ref{lem:nonl}.
 Summarizing, we have just proved that, uniformly with respect to $x\in \R$, we have
 $$
\limsup_{n\to\infty } n^{r/2-1}\big| F_n(x) - G_r(x) \big| \le \varepsilon.
 $$
Passing with  $\varepsilon$ to 0, we   complete proof of the Theorem \ref{thm:edg}.
\end{proof}

\begin{proof}[Proof of Theorem \ref{thm:ren}] Thanks to Proposition \ref{prop:phi} the proof essentially follows from the arguments presented in \cite{Breiman} (Chapter 10).
  For any finite interval $I$, we denote by
  $$ N(I) = \#\{ n:\; \log Z_n \in I \}
  $$ the amount of time the logarithm of the population spends in $I$. Let $U$ be the corresponding renewal measure, i.e.
  $$
  U(I) = \ES N(I) = \sum_{n=1}^{\infty} \P_{\mathcal S}(\log Z_n \in I).
  $$ Our aim is to prove that the renewal measure converges vaguely  to a scaled Lebesgue measure:
  $$
  \lim_{y\to\infty} U(I+y) = \frac{|I|}{\mu}.
   $$
 Let $\H$ denotes the class of complex valued and integrable functions on $\R$ whose Fourier  transforms
   are real and compactly supported. Note that if $h\in \H$, then the Fourier inversion formula is valid
   $$
   h(x) = \int_\R e^{ivx} \widehat{h}(v)dv.
   $$
    Let $h_y = h(\cdot - y)$.
   Due to Theorem 10.7 in \cite{Breiman} it is sufficient
    to prove that
   \begin{equation}\label{Leb}
      \lim_{y\to\infty} U(h_y) = \frac 1{\mu}{\rm Leb}(h),
      \end{equation}
   for $h\in \H$. 

First for $r\in(0,1)$ we define the finite measures
   $$
   U_r(B) = \sum_{n=1}^\infty r^n \P_\S[\log Z_n \in B], \qquad  B\in {\mathcal B}(\R^+).
   $$
For any $h\in \H$, by the Fourier inversion formula, we have
   \begin{equation}\label{eq:kor1}
   U_r(h) = \sum_{n=0}^{\infty} r^n \ES\big[ h(\log Z_n) \big] = \sum_{n=0}^\infty r^n \ES \bigg[
   \int_\R Z_n^{is}\widehat h(s) ds
   \bigg]
   = \sum_{n=0}^\infty r^n    \int_\R \widehat h(s) \ES Z_n^{is} ds.
   \end{equation}
   Recall that in view of Proposition \ref{prop:phi}, for $s$ in a small neighbourhood $I_{\eta}$ of 0 we have
   \begin{equation}\label{eq:kor2}
   \big| \ES[Z_n^{is}] - \lambda^n(s) \phi(s) \big| \le C \rho^n
   \end{equation}
    for some $\rho<1$.    Fix a function $h\in \H$ and denote by $I$ the support of $\widehat h$.
   Since
   $\widehat h_y(s) = e^{-isy}\widehat h(s)$, we may write \eqref{eq:kor1} as
    $$
    U_r(h_y) = I_1 + I_2 + I_3,
    $$
    where
    \begin{align*}
    I_1 &= \int_{I_{\eta}} e^{-iys} \cdot \frac{\widehat h(s)\phi(s)}{1-r\lambda(s)} ds, \\
    I_2 &= \int_{ I_{\eta}} e^{-isy} \widehat h(s) \sum_{n=0}^{\infty} r^n \big( \ES Z_n^{is} - \lambda(s)^n \phi(s) \big) ds,\\
    I_3 & = \int_{I \cap I_{\eta}^c} e^{-isy} \widehat h(s) \sum_{n=0}^\infty r^n \ES[Z_n^{is}]ds.
    \end{align*}
    The first part $I_1$ determines \eqref{Leb}, whereas two other terms are negligible. Indeed, repeating literally arguments present in \cite{Breiman} (pages 221-224)
    one proves
    $$
    \lim_{y\to\infty} \lim_{r\to 1} I_1= \frac{2\pi}{\mu}\widehat h(0).
    $$ We skip the details. To deal with $I_2$ denote
    $$
    g(s) = {\bf 1}_{I_{\eta}} \widehat h(s) \sum_{n=0}^{\infty}\big( \ES Z_n^{is} - \lambda(s)^n \phi(s) \big).
    $$ Then, by \eqref{eq:kor2} and the Lebesgue dominated convergence, we have
    $$
    \lim_{r\to 1} I_2 = \int_\R e^{-isy} g(s)ds = \widehat g(y).
    $$ Hence, the Riemann-Lebesgue Lemma entails
    $$\lim_{y\to\infty}\lim_{r\to 1} I_2 = \lim_{y\to\infty} \widehat h(y) = 0.
    $$
    Similarly, appealing to Lemma \ref{lem:nonl}, we estimate $I_3$.
Thus,     the proof is complete.
\end{proof}

\appendix
\section{Proof of Lemmas \ref{lem:lower} and \ref{zet}}

We start with two auxiliary results.

\begin{lem}\label{lem:bin}
		For every $\xi >0$ there are $\sigma \in (0,1/2)$ and $C>0$ such that
		\begin{equation*}
		\sum _{k\leq \s n}\binom{n}{k}\leq C(1+\xi) ^n, \qquad n\in\N.
		\end{equation*}
	\end{lem}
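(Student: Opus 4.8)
The plan is to derive the bound from the classical estimate for a binomial tail in terms of the binary entropy, and then to exploit that this entropy vanishes at the left endpoint.

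First I would record the elementary inequality
\begin{equation*}
\sum_{k\le \sigma n}\binom nk \ \le\ \big(\sigma^{-\sigma}(1-\sigma)^{-(1-\sigma)}\big)^{n},\qquad \sigma\in(0,\tfrac12],\ n\in\N.
\end{equation*}
To see it, I would expand $1=(\sigma+(1-\sigma))^n=\sum_{k=0}^n\binom nk\sigma^k(1-\sigma)^{n-k}$, discard all terms with $k>\sigma n$, and observe that for $k\le\sigma n$ the factor $\sigma^k(1-\sigma)^{n-k}=(1-\sigma)^n\big(\tfrac{\sigma}{1-\sigma}\big)^k$ is at least $(1-\sigma)^n\big(\tfrac{\sigma}{1-\sigma}\big)^{\sigma n}=\sigma^{\sigma n}(1-\sigma)^{(1-\sigma)n}$, since $\sigma/(1-\sigma)\le1$ makes $k\mapsto(\sigma/(1-\sigma))^k$ nonincreasing. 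Dividing the resulting inequality $1\ge \big(\sum_{k\le\sigma n}\binom nk\big)\,\sigma^{\sigma n}(1-\sigma)^{(1-\sigma)n}$ by the common lower bound yields the displayed estimate.

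Next, writing $f(\sigma)=\sigma^{-\sigma}(1-\sigma)^{-(1-\sigma)}$, I would note that $\log f(\sigma)=-\sigma\log\sigma-(1-\sigma)\log(1-\sigma)\to0$ as $\sigma\to0^+$, so $f(\sigma)\to1$. Hence, given $\xi>0$, I can simply fix $\sigma\in(0,\tfrac12)$ small enough that $f(\sigma)\le1+\xi$; the previous display then gives $\sum_{k\le\sigma n}\binom nk\le(1+\xi)^n$ for every $n$, i.e.\ the assertion with $C=1$.

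I do not expect any genuine obstacle here: the statement is a routine consequence of the entropy bound, and the only point needing a word of justification is the limit $\sigma\log(1/\sigma)\to0$ (equivalently $f(\sigma)\to1$), which is exactly what allows one to match an arbitrarily small prescribed $\xi$. Should one prefer to bypass the entropy estimate, the same conclusion follows from the cruder chain $\sum_{k\le\sigma n}\binom nk\le(\lfloor\sigma n\rfloor+1)\binom n{\lfloor\sigma n\rfloor}$, bounding the binomial by $(en/\lfloor\sigma n\rfloor)^{\lfloor\sigma n\rfloor}$, which for large $n$ is of the form $c_\sigma^{\,n}$ with $c_\sigma\to1$ as $\sigma\to0^+$, and then shrinking $\sigma$ a little further so that the polynomial factor $\lfloor\sigma n\rfloor+1$ is absorbed into the exponential (the finitely many remaining small $n$ being handled by the constant $C$).
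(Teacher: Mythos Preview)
Your argument is correct, and in fact cleaner than the paper's. The paper bounds the sum by $(\lfloor\sigma n\rfloor)\binom{n}{\lfloor\sigma n\rfloor}$, then applies Stirling's formula to obtain $\binom{n}{\lfloor\sigma n\rfloor}\le Cn^{3/2}\sigma^{-1/2}\big(\sigma^{-\sigma}(1-\sigma)^{-(1-\sigma)}\big)^n$, and finally absorbs the polynomial prefactors by shrinking $\sigma$. Your Chernoff/entropy estimate, obtained by expanding $1=(\sigma+(1-\sigma))^n$ and using monotonicity of $(\sigma/(1-\sigma))^k$, bypasses Stirling entirely, avoids the polynomial corrections, and delivers the conclusion with $C=1$. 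Both routes ultimately hinge on the same fact, that $\sigma^{-\sigma}(1-\sigma)^{-(1-\sigma)}\to1$ as $\sigma\to0^+$; your way simply reaches the exponential bound more directly. The alternative sketch you give at the end is essentially the paper's approach.
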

\begin{proof}
Given $\sigma \in (0,1/2)$ whose precise value will be specified below, define  $m=\lfloor \s n\rfloor$.
Since the sequence $k\to {n\choose k}$ is increasing for $k\le n/2$ we have
\begin{equation*}
\sum _{k\leq \s n}\binom{n}{k}\leq m \binom{n}{m }.
\end{equation*}
Recalling the Stirling formula $n!\sim \sqrt{2\pi n} (n/e)^n$ we derive the asymptotic behavior, as $n\to \infty$, of the binomial term:
\begin{align*}
\binom{n}{m}&\sim \frac 1{\sqrt{2\pi}} \frac{\sqrt{n}}{\sqrt{m}\sqrt{n-m}}\left (\frac{n}{e}\right )^n \left (\frac{m}{e}\right )^{-m}\left (\frac{n-m}{e}\right )^{-(n-m)}\\
&\leq\frac C  {\sqrt{\sigma n}}   n^n (\s n-1)^{-\s n +1} (n-\s n -1)^{-(n-\s n-1)}\\
&\leq C n^{3\slash 2} \sigma^{-1/2} \big(  \s^{-\s }	(1-\s)^{-(1-\s) }\big)^n
\end{align*}
Since the function $x\to x^x$ converges to 1 both as $x\to 0^+$ and $x\to 1^-$, there is $\s<1/2$ such that $\s^{-\s }	(1-\s)^{-(1-\s)}  < 1+\xi$. Thus the proof is completed.
\end{proof}

To state the next  result denote $\P_k[\cdot ] = \P[\cdot |Z_0=k]$. Define
the set
$${\mathcal I} = \big\{ j\ge 1:\; \P(Q(j)>0, Q(0)>0)>0
\big\}$$
to be the set of integers $j$ such that with positive probability $0$ and $j$ are  in the support of the measure $Q$ i.e. with positive probability the initial particle
may have both $0$ and $j$ descendants.
Next, let us introduce the set of integers which can be reached from $\mathcal I$:
$$ {\rm Cl}({\mathcal I}) = \big\{ k\ge 1:\; \P_j[Z_n=k]>0 \mbox{ for some } j\in {\mathcal I}
 \mbox{ and } n\in\N_0 \big\}.$$
The following large deviations lemma was proved in \cite{BansBoningsmall} (Theorem 2.1 and Proposition 2.2).
This is the only result of \cite{BansBoningsmall} that we quote as it is:

\begin{lem}\label{lem:lower bansaye}
Assume $\P[Z_1=0]>0$. Then for any $k,j \in {\rm Cl}({\mathcal I})$ the following limit exists
$$
\rho = -\lim_{n\to\infty} \frac 1n \log \P_k[Z_n = j]
$$ and the limit does not depend on  the choice of $k$ and $j$.

Moreover if $\P_{\mathcal Q}[Z_1=0]<\gamma$, $P$ a.s. for some $\gamma < 1$ and $\E [ |\log A|]<\infty$, then $\rho >0$.
\end{lem}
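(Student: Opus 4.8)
\medskip
\noindent\textbf{Proof plan.} I would treat the two assertions separately. The existence of $\rho$ together with its independence of $k,j$ should follow from a soft argument combining subadditivity with an irreducibility property of ${\rm Cl}(\mathcal I)$, while the strict inequality $\rho>0$ is the substantial part and is where the supercriticality of $Z$ really enters.

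For the first part, the key input is the elementary observation that \emph{for every $k\ge 1$ and every $j'\in\mathcal I$ one has $\P_k[Z_1=j']>0$}: by definition of $\mathcal I$ the law $P_0$ charges reproduction measures $Q$ with $Q(0)>0$ and $Q(j')>0$ simultaneously, and on such an environment the event that one of the $k$ initial particles has exactly $j'$ children and the remaining $k-1$ have none has positive probability. (In particular $\mathcal I\ne\emptyset$, for otherwise $Q=\delta_0$ on $\{Q(0)>0\}$ and $\E\log A=-\infty$.) It follows that for all $k,j\in{\rm Cl}(\mathcal I)$ there is $m$ with $\P_k[Z_m=j]>0$: fix $j_0\in\mathcal I$, go $k\to j_0$ in one step, then $j_0\to j$ along a path witnessing $j\in{\rm Cl}(\mathcal I)$. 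Next, since the environment over disjoint blocks of generations is independent, conditioning on $\mathcal Q$ and keeping one particle alive along the concatenation of two trajectories gives the super-multiplicativity $\P_{j_0}[Z_{m+n}=j_0]\ge\P_{j_0}[Z_m=j_0]\,\P_{j_0}[Z_n=j_0]$; hence $a_n:=-\log\P_{j_0}[Z_n=j_0]\in[0,\infty)$ is subadditive and Fekete's lemma yields $a_n/n\to\rho:=\inf_n a_n/n\in[0,\infty)$. Finally I would deduce independence of the endpoints by sandwiching: the irreducibility provides, for fixed $k,j$, integers $p,q$ and $c\in(0,1)$ with $\P_k[Z_{n+p+q}=j]\ge c\,\P_{j_0}[Z_n=j_0]$ and $\P_{j_0}[Z_{n+p+q}=j_0]\ge c\,\P_k[Z_n=j]$ (obtained by prepending and appending short paths through $j_0$, resp. through $k$ and $j$), and these force $-\tfrac1n\log\P_k[Z_n=j]\to\rho$.

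For $\rho>0$ I would bound $\P_{j_0}[Z_n=j_0]\le\P_{j_0}[1\le Z_n\le j_0]$ and show this decays exponentially, using a large parameter $N>j_0$ and two estimates. First, from any $z\in\{1,\dots,N\}$ there is a uniformly positive extinction probability in one step, $\P_z[Z_1=0]=\E[Q(0)^z]\ge\E[Q(0)^N]>0$ (here $\P[Z_1=0]>0$ is used), so $1-\delta_N:=\sup_{1\le z\le N}\P_z[1\le Z_1\le N]<1$ and the Markov property gives $\P_{j_0}[1\le Z_m\le N\ \text{for all}\ m\le n]\le(1-\delta_N)^n$. Second, from $z>N$ particles a return to $[1,j_0]$ forces at most $j_0$ of the first $N$ subtrees to survive forever (if more than $j_0$ survive then $Z_m>j_0$ for all $m$), and conditionally on $\mathcal Q$ this has probability $\P[\mathrm{Bin}(N,1-q_{\mathcal Q})\le j_0]$, where $q_{\mathcal Q}<1$ $P$-a.s.\ is the quenched extinction probability (finite and $<1$ by (H2) and the standing assumptions); a Chernoff bound and dominated convergence give $\varepsilon_N:=\E\,\P[\mathrm{Bin}(N,1-q_{\mathcal Q})\le j_0]\to0$, whence $\sup_{z>N}\P_z[\exists m\ge 1:\ Z_m\le j_0]\le\varepsilon_N$. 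Decomposing a trajectory with $Z_n=j_0$ according to its successive excursions above level $N$ --- each generation spent in $[1,N]$ costing a factor $1-\delta_N$ and each completed excursion above $N$ a factor $\le\varepsilon_N$ --- and summing over excursion patterns, I would obtain $\P_{j_0}[Z_n=j_0]\le C\theta^n$ for some $\theta<1$, hence $\rho\ge-\log\theta>0$.

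The routine ingredients are the factorisations coming from the Markov property and from the independence of the environment over disjoint blocks, together with the classical facts (valid under (H2) and $\E|\log A|<\infty$) that the quenched extinction probability is $P$-a.s.\ strictly below $1$ and that $Z_n\to\infty$ $\P$-a.s.\ on $\mathcal S$. I expect the main obstacle to be the final step of the second part: the two estimates above only give $\P_{j_0}[Z_n=j_0]\to0$ directly, and upgrading this to a genuine exponential bound requires controlling both the number \emph{and the length} of the excursions above $N$ and balancing the two exponential rates against the combinatorial cost of the decomposition --- this is the technical heart of \cite{BansBoningsmall}, which is why we quote the statement rather than reproving it.
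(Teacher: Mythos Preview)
The paper does not give its own proof of this lemma: immediately before the statement it says that the result ``was proved in \cite{BansBoningsmall} (Theorem~2.1 and Proposition~2.2). This is the only result of \cite{BansBoningsmall} that we quote as it is.'' So there is no in-paper argument to compare your proposal against; the lemma is simply cited as a black box, and you yourself recognise this in your closing sentence.

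That said, your sketch is a fair outline of the strategy in \cite{BansBoningsmall}. The first part (existence and endpoint-independence of $\rho$ via super-multiplicativity and Fekete, together with the irreducibility of ${\rm Cl}(\mathcal I)$ through a fixed $j_0\in\mathcal I$) is essentially complete as written. For the second part your two ingredients are the right ones, but note two points. First, your excursion estimate $\varepsilon_N$ bounds the probability of returning to $[1,j_0]$, whereas an intermediate excursion above level $N$ only has to return to $[1,N]$; only the last excursion before time $n$ needs to end in $[1,j_0]$, so the factorisation ``each excursion costs $\varepsilon_N$'' is not literally correct and must be organised more carefully. Second, the fact that the quenched extinction probability is $P$-a.s.\ strictly below $1$ uses not just (H2) and $\E|\log A|<\infty$ but also the standing supercriticality assumption $\E\log A>0$. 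Both issues are exactly the ``technical heart'' you flag, and they are why the paper defers to \cite{BansBoningsmall} instead of reproving the bound.
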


\begin{proof}[Proof of Lemma \ref{lem:lower}]
	The proof is split into 4 steps, the crucial argument is contained in Step 4. Basically we need to consider the path $Z_1,...,Z_n$ and estimate for some $n_0$
	\begin{equation}\label{largepath}
	\P [Z_n\leq e^{\theta n}, Z_1,...,Z_n\geq n_0]\end{equation}
	and
	\begin{equation*}
		\P [Z_n\leq e^{\theta n}, Z_1,...,Z_i< n_0]\quad \mbox{for}\quad i>n\slash 2.
	\end{equation*}
	Steps 1-3 contain some preparatory calculations for \eqref{largepath}. If $\P (Z_1\geq 1)=1$, $n_0$ may be taken 1 and the proof considerably simplifies.
	
    {\sc Step 1.} Fix $0<\sigma<\min\{ 1-\gamma,1/2\}$. First we prove that for every $\eps >0$  there are  $\beta_1 > 0$ and $C_1>0$ such that
\begin{equation}\label{eq:qq2}
\E_n\bigg[ \bigg(\frac{Z_1}n\bigg)^{-\varepsilon} {\bf 1}_{\{Z_1 \ge 1 \}} {\bf 1}_{B_n}\big| {\mathcal Q}\bigg] \le C_1  e^{-\beta_1 n},\quad P  \mbox{ a.s.}
\end{equation}
for
	$$
	B_n=\{ \mbox{at most}\ \lfloor \s n\rfloor \ \mbox{among}\ \xi_{1,1},\ldots \xi_{n,1} \ \mbox{are not equal to}\ 0  \}.$$
Let us emphasize that $Z_1$ in \eqref{eq:qq2} denotes the population at the first generation of a process initiated
with $n$ individuals.

Choose $\xi \le 1/\sqrt {\gamma} - 1$.
Then	by Lemma \ref{lem:bin}
$$	\P_{\mathcal Q} (B_n)\leq \sum _{k\leq \s n}\binom{n}{k}  \P_{\mathcal Q}[\xi_{1,1}>0]^k \P_{\mathcal Q}[\xi_{1,1}=0]^{n-k}
	\leq \gamma^{(1-\s)n} \sum _{k\leq \s n}\binom{n}{k} \le C \delta^n,$$
with $\delta = 	(1+\xi) \gamma ^{(1-\s)}<1$.
	Hence
	$$
\E_n \bigg[\bigg(\frac{Z_1}{n}\bigg)^{{-\varepsilon}}\Ind {\{ Z_1\geq 1\}}\Ind {B_n}\big|{\mathcal Q}\bigg]
\leq Cn^{\varepsilon}\delta^n$$
and we conclude \eqref{eq:qq2} with $C_1 = C \sup_n n^{\varepsilon} \delta^{n/2}$, $e^{-\beta_1} = \delta^{1/2}$.

{\sc Step 2.} We will prove that there are $\eps _0,\beta_2>0$ and $n_0 \in\N$  such that
\begin{equation}\label{eq:q3}
  \E_n\bigg[ \bigg( \frac{Z_1}{n} \bigg)^{-\varepsilon} {\bf 1}_{\{Z_1 \ge 1\}}   \bigg]
 \leq e^{-\beta_2}
	\end{equation}
for all $n\ge n_0$ and $0<\eps \le \eps _0$.


Suppose that $\P (Z_1=0)>0$. Then $\{\sum _{i=1}^n \xi_{i,n}\leq \s n\}\subset B_n$ and so by \eqref{eq:qq2}
\begin{equation}\label{eq:q7}
\E_n \bigg[ \bigg(\frac{Z_1}{n} \bigg)^{{-\varepsilon}}\Ind {\{1\leq Z_1 \leq \s n\}}\bigg]
\le C e^{-\beta_1 n}
\end{equation} for all $n\ge 1$. If $\P (Z_1=0)=0$ then the left hand side of \eqref{eq:q7} is 0. 

By the reverse Fatou lemma
\begin{equation}\label{eq:q8}
\begin{split}
\limsup_{n\to\infty} \E_n&\bigg[ \bigg( \frac{Z_1}{n} \bigg)^{-\varepsilon} {\bf 1}_{\{ Z_1 >\sigma n \}} \bigg]\\
&\le \E\bigg[ \E_{\mathcal Q}\bigg[ \limsup_{n\to\infty} \bigg(\frac 1n \sum_{i=1}^n {\xi_{i,1}}\bigg)^{-\varepsilon}
 {\bf 1}_{\{ \sum_{i=1}^n \xi_{i,1}>\sigma n \}}   \bigg]   \bigg] = \E A^{-\varepsilon}.
\end{split}
\end{equation}
Indeed, the last equality follows  from the strong law of large numbers, because given ${\mathcal Q}$
$$
\lim_{n\to\infty}  \bigg(\frac 1n \sum_{i=1}^n {\xi_{i,1}}\bigg)^{-\varepsilon}
 {\bf 1}_{\{ \sum_{i=1}^n \xi_{i,1}>\sigma n \}}  = A^{-\varepsilon},\qquad \P_{\mathcal Q} \mbox{ a.s.,}
$$ since $\sigma$ has been chosen  such that $\sigma < 1-\gamma$ and $1-\gamma \leq A$, $P$ a.s.
Moreover, $\E A^{-\eps}<1$ provided $\eps $ is small enough (see the comments below \eqref{eq:a}). Therefore
combining \eqref{eq:q7} with \eqref{eq:q8} we obtain \eqref{eq:q3} for $n$ greater equal then some $n_0$.

If $\P (Z_1=0)=0$, and so $\P (Z_1=1)<1$, $n_0$ may be taken 1. Indeed, the left hand side
of \eqref{eq:q3} is always strictly smaller then 1.

{\sc Step 3.} Let $\beta _2, \eps, n_0$ be as in Step 2. We prove by induction that 
\begin{equation}\label{eq:q24}
  \E_j\big[ Z_n^{-\varepsilon} {\bf 1}_{\{Z_1\ge n_0,\ldots, Z_n \ge n_0\}}  \big] \le n_0  e^{-\beta_2 (n-1)},
\end{equation} for any $n\ge 1$ and $j\le n_0$.

Indeed, for $n=1$ and $j\le n_0$  we write
	\begin{align*}
	\E_j \big[ Z_1^{{-\varepsilon}} \Ind {\{ Z_1\geq n_0\}}\big]= & \E\big[ \left (\xi_{1,1}+...+\xi_{j,1}\right )^{{-\varepsilon}} \Ind {\{ \xi_{1,1}+...+\xi_{j,1}\geq n_0\}}\big]\\
 \leq  & \E\bigg[ \left (\xi_{1,1}+...+\xi_{j,1}\right )^{{-\varepsilon}} \cdot \sum_{i=1}^j {\bf 1}_{ \xi_{i,1}\ge 1\}}\big]\\
	&\leq \sum _{i=1}^j\E \big[ \xi_{i,1}^{{-\varepsilon}}\Ind {\{\xi_{i,1}\geq 1 \}}\big]	=j\E \big[\xi_{1,1}^{{-\varepsilon}}\Ind {\{\xi_{1,1}\geq 1 \}}\big] \leq n_0,
\end{align*}
where for the first inequality above we use the simple observation that if
 $\sum_{i=1}^j\xi_{i,1}\ge n_0$, then at least one of random variables $\xi_{i,1}$ must be greater or equal to 1.

For arbitrary $n$, appealing to \eqref{eq:q3} and the induction hypothesis, we derive
\begin{align*}
\E_j\big[ &Z_n^{{-\varepsilon}} \Ind {\{ Z_1\ge n_0,..., Z_n\geq n_0\}}\big] \\& =
\E_j \bigg[ Z_{n-1}^{{-\varepsilon}}\E_{Z_{n-1}}
\bigg[  \bigg( Z_{n-1}^{-1}\sum_{i=1}^{Z_{n-1}}\xi_{i,n}\bigg)^{{-\varepsilon}}\Ind {\{ Z_n\geq n_0\}} \bigg]
\Ind {\{ Z_1\geq n_0,..., Z_{n-1}\geq n_0\}}\bigg] \\
& \overset{\eqref{eq:q3}}{\le}  \E_j \big[ Z_{n-1}^{{-\varepsilon}} e^{-\beta_2} \Ind {\{ Z_1\geq n_0,..., Z_{n-1}\geq n_0\}}\big]   \overset{\rm ind. hyp.}{\le}  n_0 e^{-\beta_2 (n-1)}.
\end{align*} 
This proves \eqref{eq:q24}.

  {\sc Step 4. Proof of \eqref{eq:q10}.} Now we adapt to our settings arguments contained in the proof of Lemma 4.3, \cite{BansBoninglower}.
  We start with a slightly weaker version of \eqref{eq:q10}. Let $n_0$ be as in Step 2. Then  there are
  $\theta_1 <\E \log A$, and $\beta_3 > 0$ such that
  \begin{equation}\label{eq:q26}
    \P_j\big[ Z_n \le e^{\theta_1 n}, Z_1 \ge n_0, \ldots, Z_n \ge n_0
    \big] \le Cn_0e^{-\beta _3n},
  \end{equation} for any $j\leq n _0<n $.
  Indeed, in view of \eqref{eq:q24} we have
    \begin{multline*}
     \P_j\big[ Z_n \le e^{\theta_1 n}, Z_1 \ge n_0, \ldots, Z_n \ge n_0
    \big] =\P_j \big[Z^{{-\varepsilon}}_n\geq e^{-\varepsilon\theta_1 n}, Z_1\ge n_0,...,Z_n\geq n_0\big]\\
\leq e^{\varepsilon\theta_1 n} \cdot  \E_j\big[  Z_n^{{-\varepsilon}}\Ind {\{ Z_1\geq n_0,...,Z_n\geq n_0\}}\big]
 \overset{\eqref{eq:q24}}{\leq}    n_0 e^{\eps \theta _1} e^{(\eps \theta _1-\beta_2) (n-1)}.
\end{multline*} Choosing $\theta_1 < \min \{\beta_2/\varepsilon, \E\log A\}$ we conclude \eqref{eq:q26} and so \eqref{eq:q10} follows in the case $\P (Z_1=0)=0$.

Now fix $k\in {\rm Cl}({\mathcal I})$. We will prove
\begin{equation}\label{eq:q12}
  \P_k\big[ 1\le Z_n \le e^{\theta n}  \big] \le C e^{-\beta_3 n}.
\end{equation}
  Note that \eqref{eq:q12} entails the Lemma.
 Indeed, given an environment ${\mathcal Q}$, let $Z_n^1, \ldots, Z_n^k$ be independent copies of $Z_n$. Then, by the Jensen inequality,
 for some $\theta < \theta_1$ and large $n$, we have
\begin{align*}
  \P[1\le Z_n \le e^{\theta n}]^k &\le \E\Big[ \P_{{\mathcal Q}}[1\le Z_n \le e^{\theta n}]^k \Big]\\
 &=  \E\Big[ \P_{{\mathcal Q}}[1\le Z_n^i \le e^{\theta n} \mbox{ for }i=1,\ldots, k] \Big]\\
 & \le \P_k[k\le Z_{n+1} \le k e^{\theta n}]
  \le \P_k[1\le Z_{n+1} \le  e^{\theta_1 (n+1)}] \overset{\eqref{eq:q12}}{\le} Cn_0 e^{-\beta_3 (n+1)}
\end{align*}
which gives \eqref{eq:q10}.

  To prove \eqref{eq:q12} we again use the induction argument.
  Let $$\tau=\inf \{ i\leq n: Z_{i+1}\geq n_0, ..., Z_n\geq n_0\},$$ that is $ Z_{\tau}<n_0, Z_{\tau+1}\geq n_0,..., Z_n\geq n_0$ or, in other
  words, $\tau$ is the largest index smaller than $n_0$ such that $ Z_{\tau}<n_0$, but all the other elements with larger indices are
  greater or equal to $n_0$.
	If for every $i\leq n$, $Z_n<n_0$ then $\tau =n$. Therefore,
$$
\P_k\big[1\le Z_n \le e^{\theta n}  \big]
\le \sum_{i=1}^{n-1}\P_k\big[ 1\le Z_n \le e^{\theta n}, \tau = i \big] + \P_k\big[ 1\le Z_n \le n_0 \big].
$$	
  In view of Lemma \ref{lem:lower bansaye} the second term is smaller than $C e^{-\beta_4 n}$ for some $\beta_4>0$.
  To estimate the first term we divide all the summands into two sets. If  $i>n\slash 2$, an appeal to Lemma \ref{lem:lower bansaye}
  entails
  \begin{equation}\label{eq:q14}
    \P_k\big[ 1\le Z_n \le e^{\theta n}, \tau = i \big] \le \P_k[1\leq Z_{i-1} < n_0] \le C e^{-\beta_4 (i-1)} \le C e^{-\beta_5 n}.
  \end{equation}
  Finally for $i\le n\slash 2$, invoking \eqref{eq:q26}, we obtain
  \begin{equation}\label{eq:q15}
  \begin{split}
    \P_k\big[ 1&\le Z_n \le e^{\theta n}, \tau = i \big] \\
&    \le
    \P_k \big[ 1\le Z_{i-1} < n_0 \big]\sup_{j\le n_0}
    \P_j\big[ 1\le Z_{n-i} \le e^{\theta n}, Z_1\ge n_0,\ldots, Z_{n-i}\ge n_0 \big]\\
    & \overset{\eqref{eq:q26}}{\le}  C n_0 e^{-\beta_3 n\slash 2}.
    \end{split}\end{equation}
 Inequality  \eqref{eq:q15} together with \eqref{eq:q14} imply \eqref{eq:q12} and we conclude the Lemma.
\end{proof}

\begin{proof}[Proof of Lemma \ref{zet}]
 We start with 
 the following observation:
 in view of \eqref{eq:wt1}, there is  $C>0$ such that for any $s\in[0,q)$, $k\in [1,q)$ and $r\geq 1$, we have
\begin{equation}\label{eq:cz1}
  \sup_n \E\big[ |\log (A_n Z_n )|^s  (\log \Delta_n)^r {\bf 1}_{\{ \Delta_n\ge 1 \}} {\bf 1}_{U_{n+1}}  \big] <C.
\end{equation}

The proof is split into two steps.

{\sc Step 1.} First we prove the Lemma for $k= 1$. 
Since on the set $\{\Delta_n < 1\}$ we have $Z_{n+1} < A_n Z_n$, we can estimate
\begin{align*}
  \E\big[ & \log Z_{n+1}   {\bf 1}_{U_{n+1}} \big]  =
  \E\big[ \log Z_{n+1} {\bf 1}_{\{\Delta_n < 1\}}{\bf 1}_{U_{n+1}} \big]
   + \E\big[ \log Z_{n+1} {\bf 1}_{\{\Delta_n \ge  1\}}  {\bf 1}_{U_{n+1}} \big]\\
    &\le  \E\big[ \log (A_n Z_n) {\bf 1}_{\{\Delta_n < 1\}}{\bf 1}_{U_{n+1}} \big]
    + \E\big[ \big ( \log\Delta_n +  |\log (A_n Z_n)|\big) {\bf 1}_{\{\Delta_n \ge  1\}}  {\bf 1}_{U_{n+1}} \big]\\
&\le  \E\big[ |\log (A_n Z_n)| {\bf 1}_{U_{n}} \big]
    + \E\big[ \big ( \log\Delta_n \big) {\bf 1}_{\{\Delta_n \ge  1\}}  {\bf 1}_{U_{n+1}} \big]\\
&\le  \E\big[ (\log  Z_n) {\bf 1}_{U_{n}} \big]
+ \E\big[ |\log A_n | {\bf 1}_{U_{n}} \big]
    + C,
\end{align*}
where the last inequality is a consequence of \eqref{eq:cz1}. In view of hypothesis (H1) we conclude
$$  \E\big[ \log Z_{n+1} {\bf 1}_{U_{n+1}} \big]
\le  \E\big[ \log  Z_n {\bf 1}_{U_{n}} \big] + C_1
$$ for some $C_1>0$ and all $n\in\N$ which, by an easy induction argument, yields
$$
\E\big[ (\log  Z_n) {\bf 1}_{U_{n}} \big] \le C_1 n.
$$

\medskip

{\sc Step 2.} Now proceeding by induction we prove the Lemma for an arbitrary $k\le q$. Choose $k>1$ and assume that \eqref{logmoment} holds for $k-1$.
We will proceed similarly as above, however this time we will rely on the inequality
\begin{equation}\label{eq:in}
(x+y)^k\leq x^k+2^k(x^{k-1}y+y^k), \quad \mbox{for}\quad x,y\geq 0, k>1.
\end{equation}
Indeed, either $x\leq y$ and then $(x+y)^k\leq 2^ky^k$, or $0\leq y<x$, we write $x+y=\frac{x-y}{x}x+\frac{y}{x}2x$ and then by the convexity of $x^k$,
$((x+y)^k-x^k)/y\leq ((2x)^k-x^k)/x =(2^k-1)x^{k-1}$, that entails \eqref{eq:in}.

 Now we use again that
 $\{Z_{n+1} < A_n Z_n\}  = \{\Delta_n < 1\}$ and applying twice \eqref{eq:in}, \eqref{eq:cz1}, we have
 \begin{align*}
    \E\big[  (\log & Z_{n+1})^k  {\bf 1}_{U_{n+1}} \big]  =
  \E\big[ (\log Z_{n+1})^k {\bf 1}_{\{\Delta_n < 1\}}{\bf 1}_{U_{n+1}} \big]
    +\E\big[ (\log Z_{n+1})^k {\bf 1}_{\{\Delta_n \ge  1\}}  {\bf 1}_{U_{n+1}} \big]\\
    &\le  \E\big[ (\log (A_n Z_n))^k {\bf 1}_{\{\Delta_n < 1\}}{\bf 1}_{U_{n+1}} \big]
    + \E\big[ \big ( |\log (A_n Z_n)|+ \log\Delta_n \big)^k {\bf 1}_{\{\Delta_n \ge  1\}}  {\bf 1}_{U_{n+1}} \big]\\
    &\overset{\eqref{eq:in}}{\le}
        \E\big[ |\log (A_n Z_{n})| ^k  {\bf 1}_{U_{n}} \big]
        + 2^k \E \big[  |\log(A_n Z_n)|^{k-1} \log \Delta_n  {\bf 1}_{\{\Delta_n \ge 1\}} {\bf 1}_{U_{n+1}}   \big]\\
       & + 2^k \E \big[  \big( \log \Delta_n \big)^k  {\bf 1}_{\{\Delta_n \ge 1\}} {\bf 1}_{U_{n+1}}   \big]\\
  &\overset{\eqref{eq:cz1}}{\le}
\E\big[ |\log (A_n Z_{n})| ^k  {\bf 1}_{U_{n}} \big]      + C.
 \end{align*}
Next referring again to \eqref{eq:in}, we have 
 \begin{align*}
    \E\big[  (\log Z_{n+1})^k & {\bf 1}_{U_{n+1}} \big]  \le
\E\big[ \big( \log Z_n + |\log A_n| \big)^k  {\bf 1}_{U_{n}} \big]      + C\\
 &\overset{\eqref{eq:in}}{\le} \E\big[  (\log Z_n)^k  {\bf 1}_{U_{n}} \big]  +  2^k \E\big[ ( \log Z_n)^{k-1} |\log A_n| {\bf 1}_{U_{n}} \big]
 + 2^k \E\big[ |\log A_n|^k \big]      + C.
 \end{align*}
 Now, since $(Z_n, {\bf 1}_{U_n})$ and $A_n$ are independent, in view of (H1) and the induction hypothesis, we write
  \begin{align*}
  \E\big[  (\log Z_{n+1})^k  {\bf 1}_{U_{n+1}} \big] & \le
 \E\big[  (\log Z_n)^k  {\bf 1}_{U_{n}} \big]  +  2^k \E\big[ ( \log Z_n)^{k-1} {\bf 1}_{U_{n}} \big]\E[|\log A_n|]\\ &+ 2^k \E\big[ |\log A_n|^k \big] + C\\
 &\le
 \E\big[  (\log Z_n)^k  {\bf 1}_{U_{n}} \big]  +  C n^{k-1}.
 \end{align*}
which easily entails the Lemma.

\end{proof}

\vspace{5mm}

\noindent {\bf Acknowledgement}.
We thank two anonymous referees for careful reading and  numerous helpful suggestions.
 We are also grateful  to Piotr Dyszewski for useful comments and discussions. Ewa Damek incorporated some ideas communicated her by Konrad Kolesko during the work on \cite{damek:natert:kolesko}.
The  research  was  partially supported by the National Science Center, Poland (grant number  2019/33/ B/ST1/00207)

\end{document}